\newtheorem{Proposition}{Proposition}[section]
\newtheorem{Lemma}[Proposition]{Lemma}
\newtheorem{Theorem}[Proposition]{Theorem}
\newtheorem{Corollary}[Proposition]{Corollary}
\newtheorem{Definition}[Proposition]{Definition}
\newbox\squ  
\def\mod#1{#1\!\operatorname{-mod}}
\def\Z{{\mathbb Z}}
\def\0{{\bar 0}}
\def\1{{\bar 1}}
\def\R{{\mathbb R}}
\def\Res{{\operatorname{res}\:}}
\def\gdim{{\operatorname{gdim}\,}}
\def\sh{{\operatorname{sh}}}
\def\ch{{\operatorname{ch}\:}}
\def\height{{\operatorname{ht}}}
\def\sh{{\operatorname{con}}}
\def\bi{\text{\boldmath$i$}}
\def\bj{\text{\boldmath$j$}}
\def\phi{{\varphi}}
\def\Ga{{\Gamma}}
\def\la{{\lambda}}
\def\La{{\Lambda}}
\def\de{{\delta}}
\def\al{{\alpha}}
\def\be{{\beta}}
\def\underbar{\mathpalette\@underbar}
\def\@underbar#1#2{\settowidth{\@tempdimb}{$#1#2$}\@tempdimb=0.8\@tempdimb
                   \ooalign{$#1#2$\crcr%
                         \hfil\rule[-.5mm]{\@tempdimb}{.4pt}\hfil}}
\newdimen\hoogte    \hoogte=11.5pt    
\newdimen\breedte   \breedte=11.5pt   
\newdimen\dikte     \dikte=0.5pt    
\newenvironment{young}{\begingroup
       \def\vr{\vrule height0.8\hoogte width\dikte depth 0.2\hoogte}
       \def\fbox##1{\vbox{\offinterlineskip
                    \hrule height\dikte
                    \hbox to \breedte{\vr\hfill##1\hfill\vr}
                    \hrule height\dikte}}
       \vbox\bgroup \offinterlineskip \tabskip=-\dikte \lineskip=-\dikte
            \halign\bgroup &\fbox{##\unskip}\unskip  \crcr }
       {\egroup\egroup\endgroup}
\def\diagram#1{\relax\ifmmode\vcenter{\,\begin{young}#1\end{young}\,}\else%
              $\vcenter{\,\begin{young}#1\end{young}\,}$\fi}
\begin{document}

\title[Representations of Khovanov-Lauda algebras]{Homogeneous Representations of Khovanov-Lauda Algebras}
\author{Alexander Kleshchev and Arun Ram}

\begin{abstract}
We construct irreducible graded representations of simply laced Khovanov-Lauda algebras which are concentrated in one degree. The underlying combinatorics of skew shapes and standard tableaux corresponding to arbitrary simply laced types has been developed previously by Peterson, Proctor and Stembridge. In particular, the Peterson-Proctor hook formula gives dimensions of the homogeneous irreducible modules corresponding to straight shapes. 
\end{abstract}
\thanks{{\em 2000 Mathematics Subject Classification:} 20C08.}
\thanks{Supported in part by NSF grants 
DMS-0654147 and DMS-0353038.}
\address{Department of Mathematics, University of Oregon, Eugene, OR 97403, USA.}
\email{klesh@uoregon.edu}
\address{Department of Mathematics and Statistics,  
The University of Melbourne, 
Parkville, VIC, 3010,  
Australia, and Department of Mathematics, University of Wisconsin, Madson, WI 53706, USA.}
\email{A.Ram@ms.unimelb.edu.au}
\maketitle

\section{Introduction}\label{SIntro}

In \cite{KL1,KL2}, Khovanov and Lauda have introduced a new family of 
graded algebras whose representation theory is related to categorification of quantum groups. Similar algebras have been defined by Rouquier \cite{Ro}. 

In this note we give an explicit construction of 
the irreducible graded representations of simply laced Khovanov-Lauda algebras which are concentrated in one degree. These {\em homogeneous} representations turn out to be similar to  seminormal representations of affine Hecke algebras. In type $A$ this can be explained using \cite{BK}. 

By-products of our construction are notions of skew shape and standard tableaux for arbitrary simply laced types. Equivalent notions have been considered before by Peterson, Proctor, Stembridge, and Fan \cite{P1,P2,S1,S2,F,N1,N2}. In particular, the Peterson-Proctor hook formula gives dimensions of the homogeneous irreducible modules corresponding to straight shapes. 

\subsection*{Acknowledgements}
The first author is grateful to the University of Melbourne for support and hospitality. Both authors are grateful to K. Nakada, J. Stembridge and R. Green for useful comments. 

\section{Khovanov-Lauda algebras}
\subsection{Definition}
Let $\Gamma$ be a graph without multiple edges and loops (cycles allowed). Denote the set of vertices of $\Gamma$ by $I$. If $i,j\in I$ are connected by an edge, we will say that $i$ and $j$ are {\em neighbors} (in $\Ga$). We allow for $I$ to be infinite and for $\Gamma$ to contain cycles. To $\Gamma$ we associate a generalized Cartan matrix $(a_{ij})_{i,j\in I}$ as in \cite{Kac}, so that 
$$
a_{ij}=
\left\{
\begin{array}{ll}
2 &\hbox{if $i=j$,}\\
-1 &\hbox{if $i$ and $j$ are neighbors,}\\
0 & \hbox{otherwise.}
\end{array}
\right.
$$
We fix an orientation on the edges of $\Gamma$. 

Let $Q =\bigoplus_{i \in I} \Z\al_i$ be a lattice with 
a basis $\{\al_i\}_{i\in I}$ labeled by~$I$. Set $$Q_+ =\bigoplus_{i\in I}\Z_{\geq 0}\al_i.$$ For $\al=\sum_{i\in I}m_i\al_i\in Q_+$ 
define the {\em height} of $\al$ as 
$$
\height(\al):=\sum_{i\in I}m_i.
$$


The symmetric group $S_d$ 
with basic transpositions $s_1,\dots,s_{d-1}$ 
acts  
on $I^d$ on the left by place
permutations. 
We have a decomposition of $I^d$ into $S_d$-orbits:
$$
I^d = \bigsqcup_{\alpha\in Q^+\atop \height(\alpha) = d} I^\alpha,
$$
where 
\begin{equation*}
I^\alpha := \{\bi=(i_1,\dots,i_d) \in I^d\:|\:\alpha_{i_1}+\cdots+\alpha_{i_d} = \alpha\}.
\end{equation*}


Fix an arbitrary ground field $F$ and an element $\al\in Q_+$ of height $d$. The {\em Khovanov-Lauda algebra} $R_\al$ is an associative $\Z$-graded unital $F$-algebra, given by generators
\begin{equation}\label{EKLGens}
\{e(\bi)\mid \bi\in I^\al\}\cup\{y_1,\dots,y_{d}\}\cup\{\psi_1, \dots,\psi_{d-1}\}
\end{equation}
and the following relations for all $\bi,\bj\in I^\al$ and all admissible $r$ and $s$:
\begin{equation}
e(\bi) e(\bj) = \de_{\bi,\bj} e(\bi),
\quad{\textstyle\sum_{\bi \in I^\alpha}} e(\bi) = 1;\label{R1}
\end{equation}
\begin{equation}\label{R2PsiY}
y_r e(\bi) = e(\bi) y_r;
\end{equation}
\begin{equation}
\psi_r e(\bi) = e(s_r\bi) \psi_r;\label{R2PsiE}
\end{equation}
\begin{equation}\label{R3Y}
y_r y_s = y_s y_r;
\end{equation}
\begin{equation}\label{R3YPsi}
y_r \psi_s = \psi_s y_r\qquad (r \neq s,s+1);
\end{equation}
\begin{equation}
(y_{r+1} \psi_r-\psi_r y_r) e(\bi) =
\left\{
\begin{array}{ll}
e(\bi) &\hbox{if $i_r=i_{r+1}$,}\\
0 &\hbox{if $i_r\neq i_{r+1}$;}
\end{array}
\right.
\label{R5}
\end{equation}
\begin{equation}
(\psi_r y_{r+1} -y_r\psi_r)e(\bi) 
= 
\left\{
\begin{array}{ll}
e(\bi) &\hbox{if $i_r=i_{r+1}$,}\\
0 &\hbox{if $i_r\neq i_{r+1}$;}
\end{array}
\right.
\label{R6}
\end{equation}
\begin{equation}
\psi_r^2e(\bi) = 
\left\{
\begin{array}{ll}
0 &\hbox{if $i_r=i_{r+1}$,}\\
e(\bi) & \hbox{if $a_{i_r i_{r+1}}=0$,}
\\
(y_r-y_{r+1})e(\bi)
 &\hbox{if $i_r\rightarrow i_{r+1}$,}
 \\
(y_{r+1}-y_{r})e(\bi)
 &\hbox{if $i_{r+1}\rightarrow i_{r}$;}
\end{array}
\right.
 \label{R4}
\end{equation}
\begin{equation} 
\psi_r \psi_s = \psi_s \psi_r\qquad (|r-s|>1);\label{R3Psi}
\end{equation}
\begin{equation}
\begin{split}
(\psi_{r+1}\psi_{r} \psi_{r+1}-\psi_{r} \psi_{r+1} \psi_{r}) e(\bi) 
=
\left\{\begin{array}{ll}
e(\bi)&\text{if $i_{r+2}=i_r\rightarrow i_{r+1}$,}\\
-e(\bi)&\text{if $i_{r+1}\rightarrow i_r=i_{r+2}$,}\\
0 &\text{otherwise.}
\end{array}\right.
\end{split}
\label{R7}
\end{equation}
The grading on $R_\al$ is defined by 
$$
\deg(e(\bi))=0,\quad \deg(y_re(\bi))=2,\quad\deg(\psi_r e(\bi))=-a_{i_ri_{r+1}}.
$$

\subsection{Basis Theorem}
For each element $w\in S_d$ fix a reduced expression $w=s_{i_1}\dots s_{i_m}$ and set 
$$
\psi_w:=\psi_{i_1}\dots \psi_{i_m}.
$$
In general, $\psi_w$ is not independent of the choice of reduced expression of $w$.

\begin{Theorem}\label{TBasis}{\cite[Theorem 2.5]{KL1}}%
{\bf (Basis Theorem)}
The elements 
\begin{equation}\label{EBasis}
\{\psi_w y_1^{m_1}\dots y_d^{m_d}e(\bi)\mid w\in S_d,\ m_1,\dots,m_d\in\Z_{\geq 0}, \ \bi\in I^\al\}
\end{equation}
form an $F$-basis for $R_\al$. 
\end{Theorem}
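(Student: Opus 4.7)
The plan is to follow the standard two-step template for basis theorems of affine-Hecke type: first establish that the elements in~\eqref{EBasis} span $R_\al$ by repeatedly applying the defining relations, and then establish linear independence by exhibiting a faithful ``polynomial representation'' of $R_\al$ on which those elements act as linearly independent operators.

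\textbf{Spanning.} Any element of $R_\al$ is a linear combination of monomials in the generators~\eqref{EKLGens}, and the plan is a straightening argument on such a monomial. Using~\eqref{R1} and~\eqref{R2PsiE}, I collect a single idempotent $e(\bi)$ at the right of each monomial. Then~\eqref{R2PsiY}, \eqref{R3Y}, \eqref{R3YPsi}, \eqref{R5}, \eqref{R6} let me push every $y$-factor to the right of every $\psi$-factor, at the cost of correction terms whose $\psi$-length is strictly smaller. What remains is an expression of the form $\psi_{r_1}\cdots\psi_{r_m}\,y_1^{m_1}\cdots y_d^{m_d}\,e(\bi)$. If $s_{r_1}\cdots s_{r_m}$ is a reduced expression for some $w\in S_d$, then \eqref{R3Psi} and~\eqref{R7} rewrite this $\psi$-word, modulo terms of strictly smaller $\psi$-length, into the chosen distinguished $\psi_w$. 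If $s_{r_1}\cdots s_{r_m}$ is not reduced, one applies the usual Matsumoto-style argument: braid moves \eqref{R3Psi}, \eqref{R7} produce a subword $\psi_r^2$, and then~\eqref{R4} either kills the term or replaces it by $\pm(y_r-y_{r+1})e(\bi')$, which has strictly smaller $\psi$-length. Induction on $\psi$-length reduces everything to the spanning set~\eqref{EBasis}.

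\textbf{Linear independence.} For linear independence I would construct a polynomial representation on $P_\al := \bigoplus_{\bi\in I^\al} F[y_1,\dots,y_d]\otimes e(\bi)$: let $e(\bj)$ act as projection onto the $\bj$-summand, $y_r$ act by multiplication on each summand, and $\psi_r$ act by an operator that carries the $\bj$-summand to the $s_r\bj$-summand via a divided-difference-type operator, with the precise formula dictated by the four cases $i_r=i_{r+1}$, $a_{i_ri_{r+1}}=0$, $i_r\to i_{r+1}$, $i_{r+1}\to i_r$ appearing in~\eqref{R4} and~\eqref{R7}. The main obstacle is the verification that these operators satisfy all the relations~\eqref{R1}--\eqref{R7}; it is a direct but lengthy case-by-case check, and the braid-type relation~\eqref{R7} is by far the most delicate, which is exactly why the orientation of $\Ga$ enters the definition there and in~\eqref{R4}. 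Once $P_\al$ is known to be an $R_\al$-module, I would conclude by a triangularity argument on $P_\al$: evaluating a putative linear relation among the elements~\eqref{EBasis} against $1\otimes e(\bi)$ and tracking simultaneously the induced permutation of summands (which records $w\in S_d$) and the top-degree $y$-monomial of the resulting polynomial (which records $(m_1,\dots,m_d)$) forces all coefficients to vanish, so the spanning elements are linearly independent in $R_\al$.
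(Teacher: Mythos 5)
The paper itself contains no proof of Theorem~\ref{TBasis}: it is quoted from \cite[Theorem~2.5]{KL1}, and your two-step plan (spanning by a straightening induction on $\psi$-length, linear independence through a polynomial representation) is precisely the route taken there, so the overall architecture is right; in particular the spanning half, including the Matsumoto-style handling of non-reduced $\psi$-words via \eqref{R3Psi}, \eqref{R7} and \eqref{R4} modulo terms of strictly smaller $\psi$-length, is sound as sketched.

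The genuine gap is in your final step. Evaluating a putative relation only at the single vector $1\otimes e(\bi)$ cannot force the coefficients to vanish, because whenever $i_r=i_{r+1}$ the relations \eqref{R5}, \eqref{R6} and \eqref{R4} force $\psi_r$ to act on your polynomial module by a divided-difference-type operator, and such operators kill constants. Concretely, take $\al=2\al_i$ and $\bi=(i,i)$ (the nilHecke case): the basis element $\psi_1 e(\bi)$ acts as $\pm\partial_1$ and annihilates $1\otimes e(\bi)$, and a suitable combination $\psi_1 y_1 e(\bi)\mp e(\bi)$ annihilates it as well; moreover $s_1\bi=\bi$, so your bookkeeping of which summand the image lands in distinguishes nothing, and there is no ``top-degree $y$-monomial'' to track when the image is zero. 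Thus $1\otimes e(\bi)$ is not a separating vector, and the conclusion does not follow as stated. The standard repair --- and what \cite{KL1} actually do --- is to show the elements \eqref{EBasis} act by linearly independent \emph{operators} on the whole polynomial module: fix the source and target idempotents, take $w$ of maximal length occurring with nonzero coefficient, and compare leading symbols with respect to the filtration by the length of $w$ (the symbol of $\psi_w y_1^{m_1}\cdots y_d^{m_d}e(\bi)$ being the monomial times a product of divided differences and transpositions); this reduces to the classical faithfulness statement that $\sum_w p_w\partial_w=0$ forces all $p_w=0$, which requires applying the operators to arbitrary polynomials, not just to the constant $1$. (A cosmetic point: you reuse $P_\al$ for the polynomial module, while the paper already uses $P_\al$ for the subalgebra generated by $y_1,\dots,y_d$ and the $e(\bi)$.)
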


Denote by $P_\al$ the (commutative) subalgebra of $R_\al$ generated by $y_1,\dots,y_d$ and all $\{e(\bi)\mid \bi\in I^\al\}$. By the Basis Theorem, 
$$
\{y_1^{m_1}\dots y_d^{m_d}e(\bi)\mid m_1,\dots, m_d\in\Z_{\geq 0},\ \bi\in I^\al\}
$$
is a basis of $P_\al$. 

\subsection{Modules, weights, and characters}
If $V=\oplus_{k\in \Z} V[k]$ is a $\Z$-graded vector space, its {\em graded dimension} is 
$$
\gdim V:=\sum_{k\in \Z}(\dim V[k])q^k\in\Z[q,q^{-1}]. 
$$ 
Recall that $R_\al$ is a $\Z$-graded algebra. All $R_\al$-modules will be assumed {\em graded}, unless otherwise stated. We will work in the category 
$$
\mod{R_\al}=\{\text{finite dimensional graded $R_\al$-modules}\}.
$$
Since all $y_re(\bi)$ are positively graded, the elements $y_r$ act nilpotently on all modules $M\in\mod{R_\al}$. 

For every $\bi\in I^\al$ and any $M\in\mod{R_\al}$, the {\em $\bi$-weight space} of $M$ is
$
M_\bi:=e(\bi)M.
$
We have a decomposition of (graded) vector spaces
$$
M=\bigoplus_{\bi\in I^\al}M_\bi.
$$
We say that $\bi$ is a {\em weight of $M$} if $M_\bi\neq 0$, and refer to $I^\al$, as the set of {\em weights for $R_\al$}. Note by (\ref{R2PsiE}) that 
\begin{equation}\label{EAction}
\psi_r M_\bi\subseteq M_{s_r \bi}.
\end{equation}
Let $\Z[q,q^{-1}][I^\al]$ be the free $\Z[q,q^{-1}]$-module with basis $\{e^\bi\mid \bi\in I^\al\}$. The {\em formal character} of the module $M\in\mod{R_\al}$ is 
\begin{equation*}
\ch M:=\sum_{\bi\in I^\al}(\gdim M_\bi) e^{\bi}.
\end{equation*}

The formal character map $\ch: \mod{R_\al}\to \Z[q,q^{-1}][I^\al]$ factors through to give a $\Z[q,q^{-1}]$-linear map from the Grothendieck group
\begin{equation}\label{EChMap}
\ch: K(\mod{R_\al})\to \Z[q,q^{-1}][I^\al].
\end{equation}
The following result shows that the characters of the irreducible $R_\al$-modules are linearly independent. 

\begin{Theorem}\label{TFCh}{\rm \cite[Theorem~3.17]{KL1}}\,
The map (\ref{EChMap}) is injective. 
\end{Theorem}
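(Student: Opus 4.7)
The plan is to extract from each irreducible $R_\al$-module a ``highest weight'' invariant that is visible on the character and that uniquely identifies the module. Concretely, equip $I^\al$ with the lexicographic order, and for each irreducible $L\in\mod{R_\al}$ let $\bi^*(L)\in I^\al$ denote the lex-largest weight of $L$, i.e., the maximum $\bi$ with $e(\bi)L\neq 0$. The plan rests on two structural claims about irreducible modules: (i) for each irreducible $L$, the graded weight space $e(\bi^*(L))L$ is concentrated in a single degree, so $\gdim e(\bi^*(L))L$ is a nonzero monomial in $q$; and (ii) the assignment $L\mapsto \bi^*(L)$ is injective on the set of isomorphism classes of irreducibles, modulo grading shifts.

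Granting (i) and (ii), the theorem follows from a standard highest-weight argument. Choose representatives $\{L_\la\}_{\la\in\La}$ of the isomorphism classes of irreducible $R_\al$-modules, one per grading-shift orbit, and suppose for contradiction that $\sum_{\la\in\La}f_\la(q)\,\ch L_\la=0$ with some $f_\la\in\Z[q,q^{-1}]$ not all zero. Pick $\la_0$ such that $f_{\la_0}\neq 0$ and $\bi^*(L_{\la_0})$ is lex-maximal among $\{\bi^*(L_\la):f_\la\neq 0\}$. By (ii) and the maximality, only $L_{\la_0}$ contributes to the coefficient of $e^{\bi^*(L_{\la_0})}$ in the sum, which reads $f_{\la_0}(q)\,\gdim e(\bi^*(L_{\la_0}))L_{\la_0}$; by (i) this is a nonzero element of $\Z[q,q^{-1}]$, a contradiction.

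The main obstacle is establishing (i) and (ii). A natural route is induction on $\height(\al)$, using restriction to a KLR algebra of smaller rank. Writing $\bi^*(L)=(i_1,\dots,i_d)$, the subalgebra of $R_\al$ generated by $\{e(\bj,i_d)\mid (\bj,i_d)\in I^\al\}$ together with $y_1,\dots,y_{d-1}$ and $\psi_1,\dots,\psi_{d-2}$ is a copy of $R_{\al-\al_{i_d}}$, and the ``truncation'' of $L$ to this subalgebra should have $(i_1,\dots,i_{d-1})$ as its lex-top weight; by the inductive hypothesis this truncation is determined by that top weight up to grading shift. The lex-maximality of $\bi^*(L)$ together with the defining relations (\ref{R5})--(\ref{R7}) then forces $y_d$ and each $\psi_r$ that would carry $e(\bi^*(L))L$ to a lex-strictly-larger weight to act as zero on the top weight space; this both yields the one-dimensional concentration in (i) and allows one to identify $L$ uniquely as the head of an induced module built from the smaller algebra, giving (ii). The delicate point---and the reason the result is nontrivial---is controlling how the cubic braid relation (\ref{R7}) and the quadratic $\psi_r^2$ relation (\ref{R4}) interact with the lex order, which is where a convex/Lyndon-style refinement of the order, rather than bare lexicographic, may ultimately be needed.
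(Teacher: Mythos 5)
You should first note that the paper does not prove this statement at all: it is quoted from \cite[Theorem~3.17]{KL1}, whose proof runs by induction on $\height(\al)$ using the exact restriction functors to $R_{\al-\al_{i}}\otimes R_{\al_i}$ and crystal-operator-type facts (the socle of the $i$-restriction of an irreducible is irreducible, and $\eps_i$ of an irreducible is read off its character), not by a lexicographic highest-weight argument. Your reduction skeleton is fine as far as it goes: choosing one representative per shift orbit is legitimate, and at the chosen lex-maximal weight the surviving coefficient $f_{\la_0}(q)\,\gdim e(\bi^*(L_{\la_0}))L_{\la_0}$ is nonzero simply because $\Z[q,q^{-1}]$ is an integral domain -- so your claim (i) is not even needed. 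But the entire weight of the proof then rests on claim (ii), the injectivity of $L\mapsto \bi^*(L)$ up to shift, and this is left essentially unproved. That claim is a theorem of depth comparable to the one you are proving: in finite type it is the ``highest word'' parametrization of irreducibles (Kleshchev--Ram's Lyndon-word theory, McNamara), whose known proofs use induction/restriction machinery of exactly the kind underlying the Khovanov--Lauda argument -- and in places the linear independence of characters itself -- so your route is at serious risk of circularity; in the generality of this paper (arbitrary graphs, cycles allowed) it is not clear such a statement is even available.

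Moreover, the mechanism you sketch for (i)/(ii) fails on the simplest example. For $\al=2\al_i$, $R_\al$ is the nilHecke algebra: its unique irreducible $L$ has a single weight $(i,i)$, the weight space $e(ii)L=L$ is two-dimensional with graded dimension $q+q^{-1}$ (so (i) is false), and neither $y_1$ nor $y_2$ acts as zero on this top weight space (e.g.\ on the basis given by the images of $1$ and $y_1$ in the polynomial representation, $y_2$ acts nontrivially). Hence lex-maximality does \emph{not} force $y_d$ to kill the top weight space, and relations (\ref{R5})--(\ref{R7}) alone will not deliver the vanishing you need. A further gap in the inductive step: the ``truncation'' $e'L$ with $e'=\sum_{\bj}e(\bj,i_d)$ is a module over the (non-unital) copy of $R_{\al-\al_{i_d}}$, but it is generally far from irreducible, so your inductive hypothesis -- which concerns irreducibles of smaller rank -- does not apply to it directly; controlling its socle or head is precisely the extra input (the $\tilde e_i$ formalism, Frobenius reciprocity and the Mackey/shuffle lemma) that the actual proof in \cite{KL1} supplies. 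As it stands, the proposal replaces the theorem by an unproved claim of at least equal difficulty, so it does not constitute a proof.
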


\subsection{Weight graph} 
Let $1\leq r<d$ and $\bi\in I^\al$. We call $s_r$ an {\em admissible transposition} for $\bi$ if $a_{i_r i_{r+1}}=0$. 
By (\ref{R4}), if $\bi$ is a weight of $M\in\mod{R_\al}$ and $s_r$ is an admissible transposition for $\bi$, then $\gdim M_\bi=\gdim M_{s_r\bi}$. This explains our interest in the following combinatorial object. 

Define the {\em weight graph} $G_\al$ as the graph with the set of vertices $I^\al$, and with $\bi,\bj\in I^\al$ connected by an edge if and only if $\bj=s_r \bi$ for some admissible transposition $s_r$ for $\bi$. 
We want to describe the connected components of $G_\al$. 

Let $\bi\in I^\al$, and $a,b\in I$ be neighbors in $\Ga$. The \emph{$\{a,b\}$-sequence of
$\bi$} is the sequence of $a$'s and $b$'s obtained by ignoring all entries of $\bi$ different from 
$a$ and $b$.  For example, the $\{1,2\}$ sequence of $\bi=(1,2,2,3,4,1,2,1)$ is $(1,2,2,1,2,1)$.  Note that if $s_r$ is admissible transposition for $\bi$ then the $\{a,b\}$-sequence of $\bi$ is the same as the $\{a,b\}$-sequence of $s_r\bi$ for every pair  of neighbors $a,b\in I$. So the $\{a,b\}$-sequences are invariants of connected components of $G_\al$. It turns out that these invariants are enough to describe the components:

\begin{Proposition}\label{PComb}
Let $\bi,\bj\in I^\al$. Then $\bi$ and $\bj$ belong to the same connected component of $G_\al$ if and only if their $\{a,b\}$-sequences coincide for each pair of neighbors $a,b\in I$. 
\end{Proposition}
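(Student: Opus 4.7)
The forward direction is essentially immediate: if $s_r$ is an admissible transposition for $\bi$, then $a_{i_r i_{r+1}}=0$, so $i_r$ and $i_{r+1}$ are neither equal nor neighbors in $\Ga$. Hence for any pair of neighbors $a,b\in I$, at most one of $i_r,i_{r+1}$ lies in $\{a,b\}$, and swapping them does not alter the subsequence of $a$'s and $b$'s. So $\{a,b\}$-sequences are invariants of connected components, as already noted in the text.

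For the nontrivial direction, my plan is induction on $d=\height(\al)$, with the base cases $d\le 1$ trivial. Assume the statement for all heights $<d$, and let $\bi,\bj\in I^\al$ have equal $\{a,b\}$-sequences for every pair of neighbors $a,b$. Set $a:=i_1$ and let $k$ be the smallest index with $j_k=a$. The key step is to argue that $j_k$ can be moved to the front of $\bj$ by admissible transpositions. For any neighbor $b$ of $a$, the $\{a,b\}$-sequence of $\bi$ begins with $a$; by hypothesis the same is true of $\bj$, which means no $b$ appears in $\bj$ before position $k$. Combined with the minimality of $k$, this gives $j_\ell\notin\{a\}\cup\{\text{neighbors of }a\}$ for all $\ell<k$, so $a_{a,j_\ell}=0$ and we may successively apply $s_{k-1},s_{k-2},\dots,s_1$ (each admissible at the moment it is applied) to transform $\bj$ into
$$
\bj':=(a,j_1,\ldots,j_{k-1},j_{k+1},\ldots,j_d).
$$
This is the main combinatorial step; everything else is bookkeeping.

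Next I would strip off the leading $a$ and apply the inductive hypothesis to $\tilde\bi:=(i_2,\ldots,i_d)$ and $\tilde\bj':=(j_1,\ldots,j_{k-1},j_{k+1},\ldots,j_d)$ in $I^{\al-\al_a}$. One checks easily that $\tilde\bi$ and $\tilde\bj'$ have matching $\{a',b'\}$-sequences for every pair of neighbors $a',b'$: moving the $a$ to the front of $\bj$ did not alter any $\{a',b'\}$-sequence (the moves skipped over letters not in $N(a)$, so in particular not equal to $b'$ when $a'=a$, and did not involve an $a$ when $a\notin\{a',b'\}$), and deleting the initial $a$ from both $\bi$ and $\bj'$ either removes the same first $a$ from both $\{a,b'\}$-sequences or leaves $\{a',b'\}$-sequences with $a\notin\{a',b'\}$ unchanged. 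The inductive hypothesis then yields a path in $G_{\al-\al_a}$ from $\tilde\bj'$ to $\tilde\bi$, which lifts to a sequence of admissible transpositions $s_r$ with $r\ge 2$ connecting $\bj'$ to $\bi$ in $G_\al$.

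The only genuine obstacle is the step showing $j_k$ can be slid to the front; the trick there is recognizing that the hypothesis on $\{a,b\}$-sequences, applied to all neighbors $b$ of $a$ simultaneously, forbids precisely the entries that could block such a slide. Once that is in hand, the induction and the subsequence bookkeeping are routine.
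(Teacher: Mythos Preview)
Your proof is correct and follows essentially the same inductive approach as the paper: both argue that the matching $\{a,b\}$-sequences for all neighbors of a chosen letter allow that letter to be slid to an end by admissible transpositions, after which one strips it off and invokes induction. The only difference is cosmetic---you slide $i_1$ to the front of $\bj$, while the paper slides $j_d$ to the back of $\bi$---and your version is in fact slightly more explicit about why \emph{every} neighbor is excluded from the intervening positions.
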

\begin{proof}
We prove the result by induction on $d=\height(\al)$.
Assume that $\bi=(i_1,\ldots, i_d)$ and $\bj=(j_1,\ldots, j_d)$ are elements of $I^\alpha$ so that
the $\{a,b\}$-sequences of $\bi$ and $\bj$ coincide for all pairs of neighbors $a,b\in I$.
If $d=1$ then $\bi=\bj$, and so $\bi$ and $\bj$ are in the same connected component of $I^\alpha$.
If $d>1$ let
$b = j_d$ and let $a$ be a neighbor of $b$.  Let $k$ be maximal such that $i_k=b$.  None of $i_{k+1},\ldots, i_d$ is equal to $a$.  Therefore $\bi$ is connected to 
$$\bi' = s_{d-1}\cdots s_{k+1}s_k\bi = (i_1,\ldots, i_{k-1}, i_{k+1},\ldots, i_d,b).$$  
Now $\bi'$ and $\bj$ are in the same connected component since, by inductive assumption, $(i_1,\ldots, i_{k-1}, i_{k+1},\ldots, i_{d})$ and $(j_1,\ldots, j_{d-1})$ are in the same connected component of $G_{\alpha - \alpha_b}$.
\end{proof}

\subsection{Configurations and standard tableaux}\label{SSConf}
We suggest  `geometric' objects called configurations to visualize connected components of $G_\al$. First, the {\em $\Gamma$-abacus} is $\Gamma\times\R_{\geq 0}$, imagined as the abacus with the runners going up on each vertex of $\Gamma$. We picture the $\Gamma$-abacus in $\R^3$ with the distance 
between neighboring runners always equal to $1$. For example, 
for $\Gamma =D_4$ and $\Gamma=A_\infty$ the abaci look like this:
$$\beginpicture
\setcoordinatesystem units <.75cm,.75cm>         
\setplotarea x from -2 to 2, y from -1 to 3  
\put{$1$} at -1 -0.3 
\put{$2$} at 0 -0.3 
\put{$3$}[l] at 1.3 0.4 
\put{$4$}[t] at 0.8 -0.7 
\put{$\circ$} at -1 0
\plot -0.9 0  -0.1 0 / 
\put{$\circ$} at 0 0
\plot 0.1 0.05  1.1 0.35 / 
\put{$\circ$} at 1.2 0.35
\plot 0.1 -0.05  0.7 -0.5 / 
\put{$\circ$} at 0.8 -0.5
\setdots
\plot -1 0.1 -1 3 /
\plot 0 0.1 0 3 /
\plot 1.2 0.45  1.2 3 /
\plot 0.8 -0.4  0.8 2.6 /
\endpicture
\qquad \text{and}\qquad 
\beginpicture
\setcoordinatesystem units <.75cm,.75cm>         
\setplotarea x from -4 to 4, y from -1 to 3  
\put{$-3$} at -3 -0.5 
\put{$-2$} at -2 -0.5 
\put{$-1$} at -1 -0.5 
\put{$0$} at 0 -0.5 
\put{$1$} at 1 -0.5 
\put{$2$} at 2 -0.5 
\put{$3$} at 3 -0.5 
\put{$\cdots$} at 4 -0.5
\put{$\cdots$} at -4 -0.5
\plot -4 0  -3.1 0 / 
\put{$\circ$} at -3 0
\plot -2.9 0  -2.1 0 / 
\put{$\circ$} at -2 0
\plot -1.9 0  -1.1 0 / 
\put{$\circ$} at -1 0
\plot -0.9 0  -0.1 0 / 
\put{$\circ$} at 0 0
\plot 0.1 0  0.9 0 / 
\put{$\circ$} at 1 0
\plot 1.1 0  1.9 0 / 
\put{$\circ$} at 2 0
\plot 2.1 0  2.9 0 / 
\put{$\circ$} at 3 0
\plot 3.1 0  4 0 / 
\setdots
\plot -3 0.1 -3 3 /
\plot -2 0.1 -2 3 /
\plot -1 0.1 -1 3 /
\plot  0 0.1  0 3 /
\plot  1 0.1  1 3 /
\plot  2 0.1  2 3 /
\plot  3 0.1  3 3 /
\endpicture
$$

The `beads' of the abacus have shape depending on the runners. The bead on runner $i$ is `glued' out of isosceles  right triangles with hypotenuse of length 2 on the runner, and the $90^\circ$ vertex sticking towards the neighboring runner (and touching it). 
Examples of a bead on runner $1$ for type $A_3$, a bead on runner $i$ for type  $A_{\infty}$, a bead on runner $2$ for type $D_4$, and a bead on runner $1$ for type $A_1$ are:
$$\beginpicture
\setcoordinatesystem units <.75cm,.75cm>         
\setplotarea x from .5 to 4, y from -1 to 3  
\put{$1$} at 1 -0.5 
\put{$2$} at 2 -0.5 
\put{$3$} at 3 -0.5 
%
\put{$\circ$} at 1 0
\plot 1.1 0  1.9 0 / 
\put{$\circ$} at 2 0
\plot 2.1 0  2.9 0 / 
\put{$\circ$} at 3 0
\plot 1 0.1  1 2 / 
\plot 1 2 2 1 /
\plot 2 1 1 0.05 /
\setdots
\plot  1 0.1  1 3 /
\plot  2 0.1  2 3 /
\plot  3 0.1  3 3 /
\endpicture
\qquad
\beginpicture
\setcoordinatesystem units <.75cm,.75cm>         
\setplotarea x from 0 to 3.5, y from -1 to 3  
\put{$i-1$} at 1 -0.5 
\put{$i$} at 2 -0.5 
\put{$i+1$} at 3 -0.5 
\put{$\cdots$} at 0 -0.5
\put{$\cdots$} at 4 -0.5
%
\plot 0.1 0  0.9 0 / 
\put{$\circ$} at 1 0
\plot 1.1 0  1.9 0 / 
\put{$\circ$} at 2 0
\plot 2.1 0  2.9 0 / 
\put{$\circ$} at 3 0
\plot 3.1 0  4 0 / 
\plot 2 2 3 1 /
\plot 2 2 1 1 /
\plot 3 1 2 0.05 /
\plot 1 1 2 0.05 /
\setdots
\plot  1 0.1  1 3 /
\plot  2 0.1  2 3 /
\plot  3 0.1  3 3 /
\endpicture
\qquad
\beginpicture
\setcoordinatesystem units <.75cm,.75cm>         
\setplotarea x from -2 to 2, y from -1 to 3  
\put{$1$} at -1 -0.3 
\put{$2$} at 0 -0.3 
\put{$3$}[l] at 1.3 0.4 
\put{$4$}[t] at 0.8 -0.7 
\put{$\circ$} at -1 0
\plot -0.9 0  -0.1 0 / 
\put{$\circ$} at 0 0
\plot 0.1 0.05  1.1 0.35 / 
\put{$\circ$} at 1.2 0.35
\plot 0.1 -0.05  0.7 -0.5 / 
\put{$\circ$} at 0.8 -0.5
\plot 0 2 1.2 1.2 /
\plot 1.2 1.2 0.7 0.7 /
\plot 0 2 0.8 0.4 /
\plot 0.8 0.4  0.08 0 /
\plot 0 2  -1 1 /
\plot  -1 1 -0.08 0.08 /
\setdots
\plot -1 0.1 -1 3 /
\plot 0 0.1 0 3 /
\plot 1.2 0.45  1.2 3 /
\plot 0.8 -0.4  0.8 2.6 /
\setdashes
\plot 1.2 1.2 0 0 /
\endpicture
\qquad
\beginpicture
\setcoordinatesystem units <.75cm,.75cm>         
\setplotarea x from 0 to 2, y from -1 to 3  
\put{$1$} at 1 -0.5 
%
\put{$\circ$} at 1 0
\plot 1 0.1  1 2 / 
\setdots
\plot  1 0.1  1 3 /
\endpicture
$$
Note that if $i$ has no neighbors, the shape of the bead is interpreted as just a segment of length $2$ (`hypotenuse without triangles').

Recall that $\al=\sum_{i\in I}m_i\al_i$ is a fixed element of $Q_+$ of height $d$. 
A {\em configuration} of type $\al$ is obtained by placing $d$ beads on the runners of the $\Gamma$-abacus, letting each bead slide down the runner as far as gravity takes it, 
so that there are a total of $m_i$ beads on runner $i$ for each $i\in I$. 
We note that configurations are essentially the same as heaps defined by Viennot \cite{V}, see also Stembridge \cite{S1,S2}. 

Let $\la$ be a configuration. A {\em tableau} of shape $\la$ or a {\em $\la$-tableau} is a bijection
$$
T:\{1,2,\dots,d\}\to\{\text{beads of $\la$}\}.
$$
A bead $B$ of $\lambda$ is {\em removable} if it can be lifted off its runner without interfering with other beads. If $B$ is on runner $i$, this is equivalent to the requirement that there are no beads on neighboring runners which are above $B$ in $\lambda$.  
A $\lambda$-tableaux is called {\em standard} if for each $k$, 
the bead $T(k)$ is above the bead $T(m)$ whenever $m<k$ and $T(m)$ is on a neighboring runner. 
Equivalently, $T$ is standard, if and only if $T(k)$ is a removable bead for the configuration $\lambda\setminus \{T(k+1),\dots,T(d)\}$ for all $1\leq k\leq d$.

Let $\bi=(i_1,\dots,i_d)\in I^\al$. Place a bead on the runner $i_1$, then place a bead on the runner $i_2$, and so on, finally placing the last bead on the runner $i_d$. This procedure produces the {\em configuration of $\bi$}, written $\sh(\bi)=\sh_\Gamma(\bi)$, and the standard tableaux $T^\bi$ of the corresponding shape. For example:
$$\text{In type $A_\infty$,}\ T^{(0,-2,2)}=\beginpicture
\setcoordinatesystem units <.75cm,.75cm>         
\setplotarea x from -4 to 4, y from -1 to 3  
\put{$-3$} at -3 -0.5 
\put{$-2$} at -2 -0.5 
\put{$-1$} at -1 -0.5 
\put{$0$} at 0 -0.5 
\put{$1$} at 1 -0.5 
\put{$2$} at 2 -0.5 
\put{$3$} at 3 -0.5 
\put{$\cdots$} at 4 -0.5
\put{$\cdots$} at -4 -0.5
\put{${\mathbf 2}$} at -2 1
\put{${\mathbf 1}$} at 0 1
\put{${\mathbf 3}$} at 2 1
\plot -4 0  -3.1 0 / 
\put{$\circ$} at -3 0
\plot -2.9 0  -2.1 0 / 
\put{$\circ$} at -2 0
\plot -1.9 0  -1.1 0 / 
\put{$\circ$} at -1 0
\plot -0.9 0  -0.1 0 / 
\put{$\circ$} at 0 0
\plot 0.1 0  0.9 0 / 
\put{$\circ$} at 1 0
\plot 1.1 0  1.9 0 / 
\put{$\circ$} at 2 0
\plot 2.1 0  2.9 0 / 
\put{$\circ$} at 3 0
\plot 3.1 0  4 0 / 

\plot 2 2 3 1 /
\plot 2 2 1 1 /
\plot 3 1 2 0.05 /
\plot 1 1 2 0.05 /

\plot 0 2 1 1 /
\plot 0 2 -1 1 /
\plot 1 1 0 0.05 /
\plot -1 1 0 0.05 /

\plot -2 2 -1 1 /
\plot -2 2 -3 1 /
\plot -1 1 -2 0.05 /
\plot -3 1 -2 0.05 /

\setdots
\plot -3 0.1 -3 3 /
\plot -2 0.1 -2 3 /
\plot -1 0.1 -1 3 /
\plot  0 0.1  0 3 /
\plot  1 0.1  1 3 /
\plot  2 0.1  2 3 /
\plot  3 0.1  3 3 /
\endpicture
\ ,
$$
$$
\text{in type $A_\infty$,}\ T^{(0,1,-1)}=
\beginpicture
\setcoordinatesystem units <.75cm,.75cm>         
\setplotarea x from -4 to 4, y from -1 to 3.5  
\put{$-3$} at -3 -0.5 
\put{$-2$} at -2 -0.5 
\put{$-1$} at -1 -0.5 
\put{$0$} at 0 -0.5 
\put{$1$} at 1 -0.5 
\put{$2$} at 2 -0.5 
\put{$3$} at 3 -0.5 
\put{$\cdots$} at 4 -0.5
\put{$\cdots$} at -4 -0.5
\put{${\mathbf 1}$} at 0 1
\put{${\mathbf 2}$} at 1 2
\put{${\mathbf 3}$} at -1 2
\plot -4 0  -3.1 0 / 
\put{$\circ$} at -3 0
\plot -2.9 0  -2.1 0 / 
\put{$\circ$} at -2 0
\plot -1.9 0  -1.1 0 / 
\put{$\circ$} at -1 0
\plot -0.9 0  -0.1 0 / 
\put{$\circ$} at 0 0
\plot 0.1 0  0.9 0 / 
\put{$\circ$} at 1 0
\plot 1.1 0  1.9 0 / 
\put{$\circ$} at 2 0
\plot 2.1 0  2.9 0 / 
\put{$\circ$} at 3 0
\plot 3.1 0  4 0 / 

\plot 2 2  1  3 /
\plot 0 2 1 3 /
\plot 1 1 2 2 /

\plot 0 2 1 1 /
\plot 0 2 -1 1 /
\plot 1 1 0 0.05 /
\plot -1 1 0 0.05 /

\plot -2 2 -1 1 /
\plot -2 2  -1 3 /
\plot -1 3 0 2 /

\setdots
\plot -3 0.1 -3 3.5 /
\plot -2 0.1 -2 3.5 /
\plot -1 0.1 -1 3.5 /
\plot  0 0.1  0 3.5 /
\plot  1 0.1  1 3.5 /
\plot  2 0.1  2 3.5 /
\plot  3 0.1  3 3.5 /
\endpicture,
$$
$$
\text{in type $D_4$,}\ \sh(2,1,3,4,2)=\beginpicture
\setcoordinatesystem units <.75cm,.75cm>         
\setplotarea x from -2 to 4, y from -1 to 5  
\put{$1$} at -1 -0.3 
\put{$2$} at 0 -0.3 
\put{$3$}[l] at 1.3 0.4 
\put{$4$}[t] at 0.8 -0.7 
\put{$\circ$} at -1 0
\plot -0.9 0  -0.1 0 / 
\put{$\circ$} at 0 0
\plot 0.1 0.05  1.1 0.35 / 
\put{$\circ$} at 1.2 0.35
\plot 0.1 -0.05  0.7 -0.5 / 
\put{$\circ$} at 0.8 -0.5
\plot 0 2 0.8 0.4 /
\plot 0.8 0.4  0.08 0 /
\plot 0 2  -1 1 /
\plot  -1 1 -0.08 0.08 /
\plot -1 3 0 2 /
\plot -1 1 -1 3 /
\plot 1.2 3.2 0.79 2.79 /


\plot 1.2 1.2 1.2 3.2 /

\plot 0.8 2.6 0 2 /
\plot 0.8 0.4 0.8 2.55 /
\plot 0 4 -1 3 /
\plot 0 4 1.2 3.2 /
\plot 0 4 0.8 2.6 /

\plot .8 1.47 1.2 1.2 /
\plot 0.8 0.8 1.2 1.2 /

\setdots
\plot -1 0.1  -1 5 /
\plot 0 0.1 0 5 /
\plot 1.2 0.45   1.2 5 /
\plot 0.8 -0.4  0.8 4.6 /

\setdashes

\plot 0 2 .8 1.47 /

\plot 0 0  0.8 0.8 /

\plot 0.79 2.79 0 2 /

\endpicture
\ .
$$
The reader might note that in type $A_\infty$ configurations are closely related to the `Russian' notation for Young diagrams, cf. \cite{VK,Ok}. 

For any $\lambda$-tableau $T$ we denote by $\bi^T$ the element 
$$\bi^T=(i^T_1,\dots,i^T_d)\in I^\al,$$ 
where $i^T_k$ is the label of the runner occupied by the bead $T(k)$ ($1\leq k\leq d$). 
Now note that the maps $T\mapsto \bi^T$ and $\bi\mapsto T^\bi$ are mutually inverse bijections between the set $\mathcal{T}(\la)$ of the standard $\lambda$-tableaux and the set of weights $\bi\in I^\al$ with $\sh(\bi)=\lambda$. 
Now we can interpret Proposition~\ref{PComb} as the following statement:

\begin{Proposition}\label{PShape}
Two weights $\bi,\bj\in I^\al$ are in the same connected component of $G_\al$ if and only if $\sh(\bi)=\sh(\bj)$. Moreover, the maps $T\mapsto \bi^T$ and $\bi\mapsto T^\bi$ are mutually inverse bijections between the set of the standard $\lambda$-tableaux and the set of all weights $\bi\in I^\al$ with $\sh(\bi)=\lambda$. 
\end{Proposition}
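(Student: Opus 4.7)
The plan is to deduce the proposition from Proposition~\ref{PComb} together with a careful unpacking of the definitions of $\sh(\bi)$, $\bi^T$, and $T^\bi$. The crucial geometric observation is that a configuration of type $\al$ is completely determined by the number $m_i$ of beads on each runner $i$ (which is fixed once $\al$ is fixed) together with, for each pair of neighbors $a,b\in I$, the relative stacking order of the $a$-beads and $b$-beads. The reason is that, because the bead on runner $i$ has triangles sticking out only toward the runners neighboring $i$, a bead rests either on top of a bead on the same runner or on a bead on a neighboring runner; so beads on non-neighboring runners settle independently, and the configuration decomposes into the data of its restrictions to pairs of neighboring runners.

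Given this, I would argue that for neighbors $a,b$ the $\{a,b\}$-subsequence of $\bi$, read left to right, lists the beads of $\sh(\bi)$ on the pair of runners $\{a,b\}$ from bottom to top. This is proved by following the iterative construction of $\sh(\bi)$: each time a letter $a$ (resp.\ $b$) is encountered in $\bi$, a new bead is dropped on runner $a$ (resp.\ $b$) and lands directly on top of the current $\{a,b\}$-sub-configuration, since all earlier beads that could block it have already been placed. Consequently $\sh(\bi)=\sh(\bj)$ if and only if the $\{a,b\}$-sequences of $\bi$ and $\bj$ agree for every pair of neighbors $a,b\in I$, which by Proposition~\ref{PComb} is equivalent to $\bi$ and $\bj$ lying in the same connected component of $G_\al$. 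This proves the first assertion.

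For the bijection statement I would verify the two halves separately. First, for any $\bi\in I^\al$, the tableau $T^\bi$ is standard of shape $\sh(\bi)$: the bead labelled $k$ is placed at step $k$, so whenever $m<k$ and $T^\bi(m)$ lies on a runner neighboring $i_k$, the bead $T^\bi(m)$ is already present and pushes $T^\bi(k)$ strictly above it. Conversely, given a standard $\la$-tableau $T$, the equivalent formulation of standardness asserts that $T(k)$ is removable from $\la\setminus\{T(k+1),\dots,T(d)\}$ for every $k$; dually, placing the beads in the order $T(1),T(2),\dots,T(d)$ on their respective runners $i^T_1,i^T_2,\dots,i^T_d$ reproduces the configuration $\la$ together with the original labelling, so $T^{\bi^T}=T$. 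The opposite composition $\bi^{T^\bi}=\bi$ is immediate from the construction of $T^\bi$.

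The only step that requires genuine care is the geometric claim in the first paragraph, namely that a configuration is reconstructible from its pairwise restrictions to neighboring runners and that these restrictions are encoded by the $\{a,b\}$-sequences; the rest of the proof is an essentially formal manipulation of the definitions.
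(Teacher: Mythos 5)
Your proposal is correct and follows essentially the same route as the paper, which offers no separate proof of Proposition~\ref{PShape} but presents it as a direct reinterpretation of Proposition~\ref{PComb}, the implicit point being exactly what you verify: that a configuration together with the dropping process encodes, and is encoded by, the $\{a,b\}$-sequences for neighboring $a,b$, with standard tableaux recording the drop orders. Your write-up simply supplies the geometric details (pairwise decomposition of a configuration and the bottom-to-top reading of the $\{a,b\}$-subsequences) that the paper leaves to the reader; only the phrase ``lands directly on top of the current $\{a,b\}$-sub-configuration'' should be softened to ``lands above all previously placed beads on runners $a$ and $b$,'' since beads on other neighboring runners may push it higher, though this does not affect the relative-order conclusion you actually use.
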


\section{Homogeneous representations}
We continue working with a fixed graph $\Gamma$ and a fixed $\al=\sum_{i\in I}m_i\al_i\in Q_+$ of height $d$. 
A module $M\in \mod{R_\al}$ is called \emph{homogeneous} if it is concentrated in one degree,
i.e. $M=M[k]$ for some $k\in \Z$. 
The homogeneous irreducible modules are especially easy to understand. They are labeled by `skew shapes', and their formal characters are `sums of standard tableaux' of that shape. 

\subsection{Calibrated representations} 
First, we consider a seemingly different class of modules. A module $M\in\mod{R_\al}$ is called {\em calibrated} if $y_1,\dots,y_d$ act as zero on $M$. 
Other authors might use different terminology here, for example {\em Gelfand-Zetlin} \cite{Ch,OV}, {\em completely splittable} \cite{KCS,Kbook, R}, {\em skew} \cite{Mo}, {\em seminormal} \cite{Ma}, etc.
Our goal is to classify irreducible calibrated modules following the approach of \cite{KrR}. 

\begin{Proposition}\label{PCal1}
Let $M\in\mod{R_\al}$ be an irreducible calibrated module, and $\bi$ be a weight of $M$. Then: 
\begin{enumerate}
\item[{\rm (i)}] there is no $r$ with $i_r= i_{r+1}$;
\item[{\rm (ii)}]  there is no $r$ such that $i_r,i_{r+1}$ are neighbors and $i_{r+2}=i_r$; 
\item[{\rm (iii)}] $\dim M_\bi=1$;
\item[{\rm (iv)}] the weights of $M$ form one connected component of $G_\al$.
\end{enumerate}
\end{Proposition}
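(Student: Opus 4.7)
The plan is to dispatch (i) and (ii) by pushing the relations against a nonzero weight vector, and then to handle (iii) and (iv) together by constructing an explicit submodule indexed by a connected component of the weight graph.

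For (i), fix a nonzero $v\in M_\bi$ and suppose $i_r=i_{r+1}$; then \eqref{R5} gives $(y_{r+1}\psi_r-\psi_ry_r)v=e(\bi)v=v$, but calibratedness forces the left side to be zero, contradicting $v\neq 0$. For (ii), fix $v\in M_\bi$ nonzero, assume $i_r,i_{r+1}$ are neighbors with $i_{r+2}=i_r$, and apply \eqref{R7}; the right side is $\pm v$. On the left, $\psi_{r+1}v\in M_{s_{r+1}\bi}$ and $s_{r+1}\bi$ has entries $\ldots,i_r,i_{r+2},i_{r+1},\ldots=\ldots,i_r,i_r,i_{r+1},\ldots$, which contains an adjacent equal pair; by (i) applied to any weight of $M$, $M_{s_{r+1}\bi}=0$, so $\psi_{r+1}v=0$. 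The same reasoning gives $\psi_rv\in M_{s_r\bi}=0$. Both summands on the left of \eqref{R7} thus vanish, yielding a contradiction.

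For (iii) and (iv), let $C\subseteq I^\al$ denote the connected component of $\bi$ in $G_\al$, and for each $\bj\in C$ choose a path of admissible transpositions $\bi=\bj^{(0)}\to\bj^{(1)}\to\cdots\to\bj^{(l)}=\bj$ with $\bj^{(k)}=s_{r_k}\bj^{(k-1)}$. Set $v_\bj:=\psi_{r_l}\cdots\psi_{r_1}v\in M_\bj$. The goal is to show that $v_\bj$ is well-defined up to sign, is nonzero, and that $N:=\bigoplus_{\bj\in C}Fv_\bj$ is an $R_\al$-submodule of $M$; irreducibility then gives $N=M$, which yields (iii) and (iv) at once. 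Well-definedness and nonzero-ness follow from a braid-move analysis of the word $\psi_{r_l}\cdots\psi_{r_1}$: commuting moves come from \eqref{R3Psi}; the quadratic move uses the $a_{i_ri_{r+1}}=0$ case of \eqref{R4}, which gives $\psi_r^2v_{\bj^{(k-1)}}=v_{\bj^{(k-1)}}$ and also implies inductively that each $v_\bj$ is nonzero; the braid move $s_rs_{r+1}s_r\leftrightarrow s_{r+1}s_rs_{r+1}$ between admissible paths forces the three relevant letters to be pairwise non-adjacent, so the right side of \eqref{R7} vanishes and the braid relation holds on $v_{\bj^{(k-1)}}$.

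The hard step will be verifying the submodule property. The idempotents and the $y_r$ obviously preserve $N$; for $\psi_rv_\bj$, if $s_r$ is admissible for $\bj$ then $\psi_rv_\bj=\pm v_{s_r\bj}\in N$, and $j_r=j_{r+1}$ is impossible since $v_\bj\neq 0$ and (i) applies to $\bj$. The obstacle is the remaining case in which $j_r,j_{r+1}$ are neighbors and one must prove $\psi_rv_\bj=0$. Combinatorially this corresponds to $\sh(s_r\bj)\neq\sh(\bj)$, so that $s_r\bj\notin C$; to convert this into the algebraic vanishing my plan is to commute $\psi_r$ through the defining word $\psi_{r_l}\cdots\psi_{r_1}$, using \eqref{R3Psi}, \eqref{R4}, and \eqref{R7}, until it acts directly on a weight vector whose weight has either an adjacent equal pair or an $aba$-pattern with $a,b$ neighbors, at which point (i) or (ii) forces vanishing. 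Once $N$ is known to be an $R_\al$-submodule, irreducibility of $M$ gives $M=N$, and the basis $\{v_\bj\}_{\bj\in C}$ yields $\dim M_\bj=1$ for $\bj\in C$ and $M_{\bj'}=0$ for $\bj'\notin C$, which is exactly (iii) and (iv).
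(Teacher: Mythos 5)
Your arguments for (i) and (ii) are exactly the paper's: push \eqref{R5} against a nonzero weight vector, and for (ii) use (i) to kill both $\psi_r v$ and $\psi_{r+1}v$ so that \eqref{R7} gives $0=\pm v$. The problem is in your treatment of (iii) and (iv). There the paper argues in the opposite direction: for (iii) it takes two putative independent vectors in $M_\bi$, writes one as $\psi_{r_1}\cdots\psi_{r_k}$ applied to the other with $k$ minimal, observes that $s_{r_1}\cdots s_{r_k}=1$, and uses braid moves (valid on $M$ by (ii) and \eqref{R7}) plus \eqref{R4} to shorten the word, contradicting minimality; for (iv) it shows that if $\bj$ and $s_r\bj$ are \emph{both} weights, then the same minimality argument forces $w=\psi_r v$ and $cv=\psi_r w$ with $c\neq 0$, so $\psi_r^2v\neq 0$, and \eqref{R4} together with calibratedness then forces $a_{j_rj_{r+1}}=0$. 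This completely avoids the step your construction hinges on.

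That step is a genuine gap in your proposal. To know that $N=\bigoplus_{\bj\in C}Fv_\bj$ is a submodule you must prove $\psi_r v_\bj=0$ whenever $j_r,j_{r+1}$ are neighbors, and this is precisely the assertion that $M$ has no weights outside $C$ --- i.e.\ the hard content of (iii)--(iv). You cannot get it from (i)/(ii), since $s_r\bj$ need not contain an adjacent equal pair or an $aba$-pattern (it simply lies in a different component), and your stated remedy (``commute $\psi_r$ through the word $\psi_{r_l}\cdots\psi_{r_1}$ using \eqref{R3Psi}, \eqref{R4}, \eqref{R7} until a bad weight appears'') is only a plan: no induction is set up, no control of the error terms produced by \eqref{R7} and by the neighbor case of \eqref{R4} is given, and no argument is offered that the process terminates at a weight killed by (i) or (ii). There is also a smaller unproved point in your well-definedness claim: two paths in $G_\al$ from $\bi$ to $\bj$ give words in admissible transpositions that a priori need not represent the same element of $S_d$ (the stabilizer of $\bi$ can be nontrivial when entries repeat), so ``braid-move analysis'' alone does not relate them; one must first argue that along such paths equal entries never become adjacent, so their relative order is preserved and the two permutations coincide. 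Both points are fixable, but as written the proposal establishes (i) and (ii) only, and replaces the paper's short minimal-word argument for (iii) and (iv) with an unfinished computation.
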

\begin{proof}
(i) Assume $i_r = i_{r+1}$ and let $v\in M_\bi$ be nonzero.  Since $M$ is calibrated, $y_r$ and $y_{r+1}$ act as $0$, and \eqref{R5} leads to a contradiction:
$$0 = (y_{r+1}\psi_r - \psi_r y_r)e(\bi)v= e(\bi)v=v.$$

(ii)  Assume $(i_r,i_{r+1},i_{r+2}) = (a,b,a)$, $a$ and $b$ are neighbors, and 
$v\in M_\bi$ is nonzero.  By \eqref{R2PsiE}, $\psi_{r+1}v\in M_{s_{r+1}\bi}$ and 
$\psi_r v\in M_{s_r\bi}$. So, by (i), we have $\psi_{r+1}v = 0$ and $\psi_rv=0$.  
Using \eqref{R7}, we get a contradiction: 
$$0 = (\psi_{r+1}\psi_r\psi_{r+1} - \psi_r \psi_{r+1}\psi_r)v =\pm v.
$$

(iii) Assume for a contradiction that $v,w$ are two linearly independent elements of $M_\bi$. As $M$ is irreducible and calibrated, we may assume (up to rescaling) that $v=\psi_{r_1}\psi_{r_2}\dots\psi_{r_k}w$ and that $k$ is minimal possible. It follows from (\ref{R2PsiE}) and (i) that $s_{r_1}s_{r_2}\dots s_{r_k}=1$ in $S_d$. So we can use braid relations to rewrite 
$$
s_{r_1}\dots s_{r_k}=s_{t_1}\dots s_{t_{m-2}}s_{t}s_{t}s_{t_{m+1}}\dots s_{t_k}.
$$ 
By (ii) and (\ref{R7}), $\psi_r$'s acting on $M$ also satisfy braid relations, so we can rewrite, using also (\ref{R4}), 
\begin{align*}
\psi_{r_1}\dots \psi_{r_k}w
&=\psi_{t_1}\dots \psi_{t_{m-2}}\psi_{t}\psi_{t}\psi_{t_{m+1}}\dots \psi_{t_k}w
\\
&= c\psi_{t_1}\dots \psi_{t_{m-2}}\psi_{t_{m+1}}\dots \psi_{t_k}w
\end{align*}
for some constant $c$, which must be non-zero, and hence $c=1$. 
This contradicts the minimality of $k$. 

(iv) If $\bi$ is a weight of $M$, and $s_r$ is an admissible transposition for $\bi$, then $s_r\bi$ is also a weight of $M$, thanks to (\ref{R2PsiE}) and (\ref{R4}). So all weights in the connected component of $\bi$ in $G_\al$ appear in $M$. To see that there are no other weights, it suffices to show that if $\bj$ and  $s_r\bj$ are weights of $M$ then $s_r$ is an admissible transposition for $\bj$. 

So let $v\in M_\bj$, $w\in M_{s_r\bj}$ be non-zero vectors. After rescaling, we may assume that $w=\psi_{r_1}\dots \psi_{r_k}v$, and let $k$ be  minimal possible. By (\ref{R2PsiE}) and (i), $s_{r_1}\dots s_{r_k}=s_r$ in $S_d$. As in the proof of (iii), we deduce from the minimality of $k$ that $k=1$ and $r_1=r$, i.e. $w=\psi_r v$. Similarly, we can write $cv=\psi_rw$ for a non-zero constant $c$. So $\psi_r^2v\neq 0$. In view of (\ref{R4}), $j_r$ and $j_{r+1}$ are not neighbors, whence $s_r$ is an admissible transposition for $\bj$. 
\end{proof}

\begin{Corollary}\label{CGZ}
Let $M\in\mod{R_\al}$ be an irreducible module. Then $M$ is homogeneous if and only if $M$ is calibrated. 
\end{Corollary}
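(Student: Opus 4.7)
The plan is to prove the two directions of the equivalence separately, with the forward direction being essentially immediate and the backward direction relying crucially on Proposition~\ref{PCal1}.

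For the forward direction, I will show that if $M$ is homogeneous then every $y_r$ acts as zero. Suppose $M=M[k]$ for some $k\in\Z$, and let $v\in M$ be homogeneous (so in degree $k$). Because $\deg(y_r e(\bi))=2$, the element $y_r v$ lies in $M[k+2]=0$. Hence $y_r v=0$ for every $r$ and every $v$, which is precisely the definition of calibrated.

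For the reverse direction, assume $M$ is irreducible and calibrated. I want to use the fact that all weight spaces are one-dimensional and live in a single connected component of $G_\al$. By Proposition~\ref{PCal1}(iii), each nonzero weight space $M_\bi$ is one-dimensional, hence automatically concentrated in a single degree $d_\bi\in\Z$. The task reduces to showing that $d_\bi=d_\bj$ whenever $\bi,\bj$ are both weights of $M$. By Proposition~\ref{PCal1}(iv) it suffices to check this when $\bj=s_r\bi$ for an admissible transposition $s_r$ for $\bi$, i.e.\ $a_{i_r i_{r+1}}=0$.

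The key step is then to show that $\psi_r$ restricts to a nonzero map $M_\bi\to M_{s_r\bi}$ of degree zero. The degree is zero because $\deg(\psi_r e(\bi))=-a_{i_r i_{r+1}}=0$ for admissible $s_r$. Nonvanishing follows from relation~(\ref{R4}): since $a_{i_r i_{r+1}}=0$, we have $\psi_r^2 e(\bi)=e(\bi)$, so for a nonzero $v\in M_\bi$ the identity $\psi_r^2 v=v\neq 0$ forces $\psi_r v\neq 0$. Combined with (\ref{EAction}), this exhibits a nonzero degree-zero element of $M_{s_r\bi}$, forcing $d_{s_r\bi}=d_\bi$. Propagating equality of degrees across the connected component, we conclude that $M$ is concentrated in a single degree.

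I do not expect any serious obstacle here: everything follows quickly from Proposition~\ref{PCal1} together with the grading convention $\deg(\psi_r e(\bi))=-a_{i_r i_{r+1}}$ and the relation $\psi_r^2e(\bi)=e(\bi)$ for admissible $s_r$. The only subtlety worth stating carefully is that admissibility of $s_r$ for $\bi$ is precisely what makes both $\psi_r$ have degree zero on $e(\bi)M$ and $\psi_r$ act injectively on $M_\bi$.
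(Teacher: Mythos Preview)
Your proof is correct and follows essentially the same approach as the paper: the forward direction is identical, and for the converse both arguments use Proposition~\ref{PCal1} to reduce to the observation that admissible $\psi_r$'s have degree zero and act nontrivially between adjacent weight spaces. Your version spells out a bit more explicitly why $\psi_r$ is nonzero on $M_{\bi}$ (via $\psi_r^2 e(\bi)=e(\bi)$), while the paper simply notes that $M$ is spanned by products $\psi_{r_1}\cdots\psi_{r_k}v$ through admissible transpositions and observes these all have the same degree as $v$.
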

\begin{proof}
If $M$ is homogeneous, then $y_1,\dots,y_d$ act on $M$ as zero since they have positive degrees. Conversely, if $M$ is calibrated, it follows from Proposition~\ref{PCal1} that $M$ is a span of some $\psi_{r_1}\dots\psi_{r_k}v$ where $v\in M_\bi$ for some $\bi$, and $s_{r_m}$ is an admissible transposition for $s_{r_{m+1}}\dots s_{r_k}\bi$,  for all $m=1,\dots,k$. It follows that the degree of each $\psi_{r_1}\dots\psi_{r_k}v$ is the same as the degree of $v$, so $M$ is homogeneous. 
\end{proof} 

\subsection{Construction of homogeneous modules}
We now give an explicit construction of the homogeneous representations, which can be thought of as a generalization of Young's seminormal form \cite{Ch} from type $A_\infty$ quiver to an arbitrary quiver without loops and multiple edges. 

Let $C$ be a connected component of $G_\al$. We say that $C$ is {\em homogeneous} if for each $\bi\in C$ the following condition holds:
\begin{equation}\label{ENC}
\begin{split}
\text{if $i_r=i_s$ for some $r<s$ then there exist $t,u$ with}
\\  
\text{$r<t<u<s$ such that 
$a_{i_ri_t}=a_{i_r,i_u}=-1$.}
\end{split}
\end{equation}

\begin{Lemma}\label{equivconds}
Let $C$ be a connected component of $G_\al$. 
\begin{enumerate}
\item[{\rm (i)}] $C$ is homogeneous if and only if the condition (\ref{ENC}) holds for {\em some} $\bi\in C$.
\item[{\rm (ii)}]  $C$ is homogeneous if and only if 
the conditions (i) and (ii) of Proposition~\ref{PCal1} hold for each $\bi\in C$. 
\end{enumerate}
\end{Lemma}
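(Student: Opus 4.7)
My plan is to prove (i) by showing that condition (\ref{ENC}) is preserved under admissible transpositions, and to prove (ii) via an induction on $s-r$ combined with a sliding argument that manufactures a forbidden Proposition~\ref{PCal1}(i) or (ii) pattern from any violation of (\ref{ENC}).

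For (i), the forward direction is tautological. For the reverse, I fix $\bi\in C$ satisfying (\ref{ENC}) and a transposition $s_r$ admissible for $\bi$, set $\bj=s_r\bi$, and verify (\ref{ENC}) for $\bj$. Given $p<q$ with $j_p=j_q$, first note $\{p,q\}\neq\{r,r+1\}$ (else $i_r=i_{r+1}$, contradicting admissibility). If $\{r,r+1\}\cap[p,q]=\emptyset$, the witnesses for $\bi$ transfer unchanged. If $\{r,r+1\}\subset(p,q)$, the multisets $\{j_k\mid p<k<q\}$ and $\{i_k\mid p<k<q\}$ coincide, so the two neighbors of $j_p=i_p$ furnished by (\ref{ENC}) for $\bi$ still occur in $\bj$ at some positions in $(p,q)$. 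In the remaining case exactly one of $p,q$ lies in $\{r,r+1\}$, and I shift to a nearby pair in $\bi$---for example, if $p=r+1$ and $q>r+1$ then $j_p=i_r=i_q$ and I apply (\ref{ENC}) to $(r,q)$ for $\bi$---using $a_{i_ri_{r+1}}=0$ to exclude $r+1$ (or $r$) as a witness so that the witnesses land in $(p,q)$.

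For (ii), the direction ``(\ref{ENC}) $\Rightarrow$ Proposition~\ref{PCal1}(i),(ii)'' is immediate: $s=r+1$ in (\ref{ENC}) forbids $i_r=i_{r+1}$, and $s=r+2$ with $i_r,i_{r+1}$ neighbors forbids $i_{r+2}=i_r$, since in both cases the open interval $(r,s)$ contains at most one integer. For the converse, assume Proposition~\ref{PCal1}(i),(ii) hold throughout $C$, take $\bi\in C$ with $i_r=i_s=a$, and induct on $s-r$. If some $k\in(r,s)$ has $i_k=a$, the induction hypothesis applied to $(r,k)$ produces two neighbors of $a$ in $(r,k)\subset(r,s)$. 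Otherwise, let $N$ count the positions in $(r,s)$ carrying a neighbor of $a$. If $N\geq 2$ we are done, so suppose $N\in\{0,1\}$. When $N=0$, $a_{a,i_k}=0$ for every $k\in(r,s)$, so $s_r,s_{r+1},\ldots,s_{s-2}$ are successively admissible and slide the left $a$ to position $s-1$; the resulting $\bi''\in C$ has $i''_{s-1}=i''_s=a$, contradicting Proposition~\ref{PCal1}(i) for $\bi''$. When $N=1$, let $i_t=b$ be the unique neighbor of $a$ in $(r,s)$; the other entries of $\bi$ in $(r,s)$ are non-adjacent to $a$, so I slide the left $a$ to position $t-1$ via $s_rs_{r+1}\cdots s_{t-2}$ and the right $a$ to position $t+1$ via $s_{s-1}s_{s-2}\cdots s_{t+1}$ (each transposition being admissible for the current word), producing $\bi''\in C$ with $(i''_{t-1},i''_t,i''_{t+1})=(a,b,a)$, which contradicts Proposition~\ref{PCal1}(ii) for $\bi''$.

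The step I expect to be most delicate is the boundary case in (i) where $\{p,q\}\cap\{r,r+1\}\neq\emptyset$: there one really must use $a_{i_ri_{r+1}}=0$ to guarantee that the (\ref{ENC})-witnesses for the shifted $\bi$-pair avoid $\{r,r+1\}$ and can be transported back to $\bj$. The sliding argument in the reverse direction of (ii) is conceptually clean once $N$ is identified, but requires careful bookkeeping of admissibility at each intermediate step, with particular attention to the degenerate sub-cases $t=r+1$ and $t=s-1$ (where one of the two slides is empty) and to the base case $s-r=2$ (where the forbidden pattern is already present in $\bi$ itself).
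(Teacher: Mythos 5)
Your proof is correct and, for part (ii), follows essentially the same route as the paper: the forward direction is the same immediate observation, and the reverse direction is the same sliding argument producing a $\cdots aa\cdots$ or $\cdots aba\cdots$ pattern elsewhere in the component (your induction on $s-r$ just makes explicit the reduction to consecutive occurrences of $a$ that the paper leaves implicit). The only divergence is in part (i): the paper observes that (\ref{ENC}) is a condition on the $\{a,b\}$-sequences of $\bi$ and invokes Proposition~\ref{PComb}, whereas you verify directly that (\ref{ENC}) is preserved under a single admissible transposition; your case analysis is sound (the endpoint cases indeed only need $a_{i_ri_{r+1}}=0$ to rule out $r$ or $r+1$ as a witness), so this is just a more hands-on rederivation of the same invariance.
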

\begin{proof}
(i) Condition (3.1) is a condition on the $\{a,b\}$-sequences of $\bi$ which requires that
$$\bi = \cdots a \cdots a\cdots
\quad\hbox{only if}\quad
\bi = \cdots a \cdots b \cdots c\cdots a\cdots
$$
with $b$ and $c$ distinct neighbors of $a$.  If this condition holds for one $\bi\in C$ then,
by Proposition~\ref{PComb}, it holds for all $\bi\in C$.

(ii) `$\Rightarrow$':  If Proposition 3.1 (i) or (ii) is violated then there exists $\bi\in C$ with
$$\bi = \cdots aa \cdots\quad\hbox{or}\quad
\bi = \cdots aba\cdots,
$$
with $b$ a neighbor of $a$.  In either case $\bi$ violates the condition in (3.1).

`$\Leftarrow$': If condition (3.1) is violated then there exists $\bi \in C$ such that $\bi$ looks like
$$\hbox{Case 1:}\quad \bi = \cdots a \cdots a\cdots,$$
with $a=i_r=i_s$ and no neighbors of $a$ in between, or
$$\hbox{Case 2:}\quad \bi = \cdots a \cdots b\cdots a\cdots,$$
with $a = i_r=i_s$, $b=i_t$ a neighbor of $a$ and no other neighbors of $a$ in between $i_r$ and $i_s$.
In Case 1, $\bi$ is connected to
$$\bj = s_{i_s-1}\cdots s_{i_r+1}s_{i_r}\bi = \cdots aa\cdots,$$
which violates Proposition 3.1(i).  In Case 2, $\bi$ is connected to
$$\bj = (s_{i_t-1}\cdots s_{i_r+1}s_{i_r})(s_{i_t+1}\cdots s_{i_s-2}s_{i_s-1})\bi = \cdots aba\cdots ,$$
which violates Propositions 3.1 (ii).
\end{proof}

%

\begin{Theorem}\label{THomog}
Let $C$ be a homogeneous connected component of $G_\al$, and let us consider a vector space $S(C)$ with a homogeneous basis $\{v_\bi\mid \bi\in C\}$ labeled by the elements of $C$. 
The formulas
\begin{align*}
e(\bj)v_\bi&=\de_{\bi,\bj}v_\bi \qquad (\bj\in I^\al,\ \bi\in C),\\
y_r v_\bi&=0\qquad (1\leq r\leq d,\ \bi\in C),\\
\psi_rv_{\bi}&=
\left\{
\begin{array}{ll}
v_{s_r\bi} &\hbox{if $s_r\bi\in C$,}\\
0 &\hbox{otherwise;}
\end{array}
\right.
\quad(1\leq r<d,\ \bi\in C)
\end{align*}
define an action of $R_\al$ on $S(C)$, under which $S(C)$ is a homogeneous irreducible $R_\al$-module. Moreover, $S(C)\not\cong S(C')$ if $C\neq C'$, and every homogeneous irreducible $R_\al$-module is isomorphic to one of the modules $S(C)$.
\end{Theorem}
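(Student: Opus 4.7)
The plan is to verify first that the given formulas really do define an action of $R_\al$, then establish irreducibility, homogeneity, and non-isomorphism, and finally use Corollary~\ref{CGZ} together with Proposition~\ref{PCal1} to show every homogeneous irreducible module arises this way.

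For the verification of relations, I would go through (\ref{R1})--(\ref{R7}) in turn. Relations (\ref{R1})--(\ref{R3Psi}) are essentially immediate: the idempotent and commutation relations are built into the definition, and every $y_r$ acts as zero, which instantly takes care of (\ref{R3Y}), (\ref{R3YPsi}), (\ref{R5}), (\ref{R6})---using crucially that by homogeneity of $C$ and Lemma~\ref{equivconds}(ii), no $\bi\in C$ has $i_r=i_{r+1}$, so the right-hand sides also vanish. Relation (\ref{R4}) requires more care: if $s_r$ is admissible for $\bi$ then $s_r\bi\in C$ and $\psi_r^2 v_\bi=v_\bi$ as required; if $i_r,i_{r+1}$ are neighbors then $s_r\bi\notin C$ so $\psi_rv_\bi=0$, matching the right-hand side $(y_r-y_{r+1})e(\bi)v_\bi=0$. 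The braid relation (\ref{R7}) is the delicate one: the two non-zero cases on the right both have $i_r=i_{r+2}$ with $i_{r+1}$ a neighbor of $i_r$, which by Lemma~\ref{equivconds}(ii) cannot occur in $C$; thus in those cases both sides vanish. In the generic case, one checks by separate consideration of the patterns of admissible transpositions at positions $r,r+1$ that $\psi_{r+1}\psi_r\psi_{r+1}v_\bi=\psi_r\psi_{r+1}\psi_rv_\bi$ (either both sides are $v_{s_rs_{r+1}s_r\bi}$ when all intermediate steps are admissible, or both are zero when some step leaves $C$). This case analysis is the technical heart of the argument.

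Next, homogeneity is immediate once we declare all $v_\bi$ to lie in a single degree: since $\psi_r$ moves $v_\bi$ to $v_{s_r\bi}$ only when $s_r$ is admissible, i.e. $a_{i_ri_{r+1}}=0$, the operator $\psi_r$ acts with degree $-a_{i_ri_{r+1}}=0$ whenever it acts non-trivially. Irreducibility follows from connectedness of $C$ in $G_\al$: given any non-zero $v\in S(C)$, project onto some weight space to get a non-zero multiple of $v_\bi$, then apply successive $\psi_r$'s corresponding to a path from $\bi$ to any other $\bj\in C$ to produce $v_\bj$, so the submodule generated by $v$ contains the whole basis. Non-isomorphism $S(C)\not\cong S(C')$ for $C\ne C'$ is clear from formal characters, since $\ch S(C)=\sum_{\bi\in C}e^{\bi}$.

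Finally, for completeness, let $M$ be a homogeneous irreducible $R_\al$-module. By Corollary~\ref{CGZ}, $M$ is calibrated, so Proposition~\ref{PCal1} applies: the set $C$ of weights of $M$ forms a single connected component of $G_\al$, each weight space is one-dimensional, and conditions (i),(ii) of Proposition~\ref{PCal1} hold for all $\bi\in C$. By Lemma~\ref{equivconds}(ii), $C$ is a homogeneous component. Choosing non-zero $v_\bi\in M_\bi$ for each $\bi\in C$ normalized so that $\psi_r v_\bi=v_{s_r\bi}$ whenever $s_r\bi\in C$ (this normalization is consistent because $\psi_r^2 v_\bi=v_\bi$ in that case by (\ref{R4}), and because the $\psi_r$'s satisfy the braid relations on $M$ by the argument used in Proposition~\ref{PCal1}(iii)), the assignment $v_\bi\mapsto v_\bi$ gives an $R_\al$-module isomorphism $S(C)\iso M$. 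The main obstacle in executing the plan is carefully verifying (\ref{R7}) on $S(C)$, which requires a patient case analysis driven by the definition of a homogeneous component together with Proposition~\ref{PComb}.
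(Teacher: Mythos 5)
Most of your proposal runs parallel to the paper: the relation check (which the paper dismisses as ``straightforward''), the homogeneity and irreducibility arguments, and the character-based distinctness are all as in the printed proof, and your case analysis for (\ref{R4}) and (\ref{R7}) is correct -- the key points being that in a homogeneous $C$ no weight has $i_r=i_{r+1}$ or $i_r=i_{r+2}$ with $i_{r+1}$ a neighbor, and that $s_r\bi\in C$ happens exactly when $s_r$ is admissible for $\bi$ (a swap of equal or neighboring entries changes the relevant $\{a,b\}$-sequence, so it leaves $C$ by Proposition~\ref{PComb}).

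The one genuine gap is in your completeness step. The paper does not build an explicit isomorphism: it observes that $\ch M=\ch S(C)$ for some homogeneous component $C$ (by Corollary~\ref{CGZ} and Proposition~\ref{PCal1}) and then invokes the injectivity of the character map, Theorem~\ref{TFCh}, to conclude $M\cong S(C)$. You instead normalize basis vectors $v_\bi\in M_\bi$ so that $\psi_r v_\bi=v_{s_r\bi}$ along admissible edges, and assert consistency ``because $\psi_r^2v_\bi=v_\bi$ and the $\psi_r$ satisfy braid relations on $M$.'' That is not enough as stated: consistency is a path-independence (trivial holonomy) statement, and a closed walk in $C$ along admissible transpositions gives a word $s_{r_k}\cdots s_{r_1}$ which a priori is only known to \emph{stabilize} $\bi$ (it could be a nontrivial element permuting equal entries of $\bi$), in which case quadratic and braid relations alone do not force the composite to act as $1$ on the one-dimensional space $M_\bi$. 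To repair this you need the extra combinatorial fact that every closed walk in $C$ actually represents the identity of $S_d$ -- this follows from the equivariant bijection $\bi\leftrightarrow T^\bi$ of Proposition~\ref{PShape}, since a place permutation fixing a standard tableau must be trivial -- and then the word-reduction argument of Proposition~\ref{PCal1}(iii) (braid moves are exact on $M$ by (\ref{R7}) and condition (ii), and each cancelled $\psi_t^2$ must act as $1$, not $0$, because the total composite is invertible) shows the holonomy is $1$. With that insertion your construction works and even yields something slightly stronger (an explicit seminormal basis of $M$); but as written the consistency claim is unproved, and the paper's route via $\ch$ and Theorem~\ref{TFCh} avoids the issue entirely.
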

\begin{proof}
It is straightforward to verify that the formulas above define operators which satisfy the defining relations of $R_\al$, and so $S(C)$ is a well defined $R_\al$-module. It is also clear that it is concentrated in one degree, i.e. is homogeneous. The irreducibility of $S(C)$ follows from the definition of $C$ as a connected component of $G_\al$. If $C\neq C'$ then of course $S(C)$ is not isomorphic to $S(C')$ since they have different weights. Finally,   if $S$ is an irreducible homogeneous $R_\al$-module then by Corollary~\ref{CGZ} and Proposition~\ref{PCal1} the formal character of $S$ equals $\ch S(C)$ for some homogeneous connected component $C$, and so $S\cong S(C)$ thanks to Theorem~\ref{TFCh}.  
\end{proof}

\subsection{Skew shapes}\label{SSSS}
By Theorem~\ref{THomog}, the homogeneous connected components correspond to the homogeneous representations of $R_\al$. The homogeneous connected components are characterized by the properties (i) and (ii) from Proposition~\ref{PCal1}. The corresponding configurations can be characterized as follows:

\begin{Definition}\label{skshapes}
{\rm 
A configuration $\lambda$ is called {\em skew shape} if whenever $B_1$ and $B_2$ are two beads of $\lambda$ on the same runner then there are at least two beads on different  neighboring runners separating $B_1$ from $B_2$.  
}
\end{Definition}

For example, in type $A_\infty$, 

$$\beginpicture
\setcoordinatesystem units <.75cm,.75cm>         
\setplotarea x from -2 to 2.5, y from -1 to 4.5  
\put{$-1$} at -1 -0.5 
\put{$0$} at 0 -0.5 
\put{$1$} at 1 -0.5 
\put{$\cdots$} at 1.7 -0.5
\put{$\cdots$} at -1.7 -0.5
\put{$\circ$} at -2 0
\plot -1.9 0  -1.1 0 / 
\put{$\circ$} at -1 0
\plot -0.9 0  -0.1 0 / 
\put{$\circ$} at 0 0
\plot 0.1 0  0.9 0 / 
\put{$\circ$} at 1 0
\plot 1.1 0  1.9 0 / 
\put{$\circ$} at 2 0

\plot 0 2  1  3 /
\plot 0 4 1 3 /
\plot 0 4 -1 3 /

\plot 0 2 1 1 /
\plot 0 2 -1 1 /
\plot 1 1 0 0.05 /
\plot -1 1 0 0.05 /

\plot -1 3 0 2 /

\setdots
\plot -2 0.1 -2 4.5 /
\plot -1 0.1 -1 4.5 /
\plot  0 0.1  0 4.5 /
\plot  1 0.1  1 4.5 /
\plot  2 0.1  2 4.5 /
\endpicture
\quad 
\text{and}
\quad
\beginpicture
\setcoordinatesystem units <.75cm,.75cm>         
\setplotarea x from -3.3 to 3.1, y from -1 to 4.5  
\put{$-2$} at -2 -0.5 
\put{$-1$} at -1 -0.5 
\put{$0$} at 0 -0.5 
\put{$1$} at 1 -0.5 
\put{$2$} at 2 -0.5 
\put{$\cdots$} at 2.7 -0.5
\put{$\cdots$} at -2.7 -0.5
\plot -2.9 0  -2.1 0 / 
\put{$\circ$} at -2 0
\plot -1.9 0  -1.1 0 / 
\put{$\circ$} at -1 0
\plot -0.9 0  -0.1 0 / 
\put{$\circ$} at 0 0
\plot 0.1 0  0.9 0 / 
\put{$\circ$} at 1 0
\plot 1.1 0  1.9 0 / 
\put{$\circ$} at 2 0
\plot 2.1 0  2.9 0 / 

\plot 0 2  1  3 /
\plot 0 4 1 3 /
\plot 0 4 -1 3 /

\plot 0 2 1 1 /
\plot 0 2 -1 1 /
\plot 1 1 0 0.05 /
\plot -1 1 0 0.05 /

\plot -2 2 -1 1 /
\plot -2 2  -1 3 /
\plot -1 3 0 2 /

\setdots
\plot -2 0.1 -2 4.5 /
\plot -1 0.1 -1 4.5 /
\plot  0 0.1  0 4.5 /
\plot  1 0.1  1 4.5 /
\plot  2 0.1  2 4.5 /
\endpicture
$$

are not skew shapes, while

$$\beginpicture
\setcoordinatesystem units <.75cm,.75cm>         
\setplotarea x from -4 to 3, y from -1 to 3.5  
\put{$-2$} at -2 -0.5 
\put{$-1$} at -1 -0.5 
\put{$0$} at 0 -0.5 
\put{$1$} at 1 -0.5 
\put{$2$} at 2 -0.5 
\put{$\cdots$} at 2.7 -0.5
\put{$\cdots$} at -2.7 -0.5
\plot -2.9 0  -2.1 0 / 
\put{$\circ$} at -2 0
\plot -1.9 0  -1.1 0 / 
\put{$\circ$} at -1 0
\plot -0.9 0  -0.1 0 / 
\put{$\circ$} at 0 0
\plot 0.1 0  0.9 0 / 
\put{$\circ$} at 1 0
\plot 1.1 0  1.9 0 / 
\put{$\circ$} at 2 0
\plot 2.1 0  2.9 0 / 


\plot 0 3 1 2 /
\plot 0 3 -1 2 /
\plot 1 2 0 1 /
\plot -1 2 0 1 /

\plot 0 1 -1 0.05 /
\plot -1 0.05 -2 1 /
\plot -2 1 -1 2 /

\setdots
\plot -2 0.1 -2 3.5 /
\plot -1 0.1 -1 3.5 /
\plot  0 0.1  0 3.5 /
\plot  1 0.1  1 3.5 /
\plot  2 0.1  2 3.5 /
\endpicture
\quad
\text{and}
\quad
\beginpicture
\setcoordinatesystem units <.75cm,.75cm>         
\setplotarea x from -3.3 to 3.1, y from -1 to 4.5  
\put{$-2$} at -2 -0.5 
\put{$-1$} at -1 -0.5 
\put{$0$} at 0 -0.5 
\put{$1$} at 1 -0.5 
\put{$2$} at 2 -0.5 
\put{$\cdots$} at 2.7 -0.5
\put{$\cdots$} at -2.7 -0.5
\plot -2.9 0  -2.1 0 / 
\put{$\circ$} at -2 0
\plot -1.9 0  -1.1 0 / 
\put{$\circ$} at -1 0
\plot -0.9 0  -0.1 0 / 
\put{$\circ$} at 0 0
\plot 0.1 0  0.9 0 / 
\put{$\circ$} at 1 0
\plot 1.1 0  1.9 0 / 
\put{$\circ$} at 2 0
\plot 2.1 0  2.9 0 / 

\plot 0 2  1  3 /
\plot 0 4 1 3 /
\plot 0 4 -1 3 /

\plot 0 2 1 1 /
\plot 0 2 -1 1 /
\plot 1 1 0 0.05 /
\plot -1 1 0 0.05 /

\plot -2 2 -1 1 /
\plot -2 2  -1 3 /
\plot -1 3 0 2 /

\plot 1 3 2 2 /
\plot 2 2 1 1 /

\setdots
\plot -2 0.1 -2 4.5 /
\plot -1 0.1 -1 4.5 /
\plot  0 0.1  0 4.5 /
\plot  1 0.1  1 4.5 /
\plot  2 0.1  2 4.5 /
\endpicture
$$
are skew shapes. Note that, up to a horizontal shift, skew shapes in type $A_\infty$ are obtained by considering all usual skew shapes in the Russian notation and allowing all beads to slide down as far as gravity will take them.

If $\lambda$ is a configuration, then $S_d$ acts on the set of $\lambda$-tableaux by permutations of  $\{1,2,\dots,d\}$.  
Theorem~\ref{THomog} can now be restated as follows: 

\begin{Theorem}\label{tabconst}
Let $\lambda$ be a skew shape, and $\mathcal{T}(\lambda)$ be the set of all standard $\lambda$-tableaux. Consider a vector space $S(\lambda)$ with a homogeneous basis $\{v_T\mid T\in \mathcal{T}(\lambda)\}$. 
The formulas
\begin{align*}
e(\bj)v_T&=\de_{\bi^T\bj}v_T \qquad (\bj\in I^\al,\ T\in \mathcal{T}(\lambda)),\\
y_r v_T&=0\qquad (1\leq r\leq d,\ T\in \mathcal{T}(\lambda)),\\
\psi_rv_T&=
\left\{
\begin{array}{ll}
v_{s_rT} &\hbox{if $s_rT$ is standard,}\\
0 &\hbox{otherwise;}
\end{array}
\right.
\quad(1\leq r<d,\ T\in \mathcal{T}(\lambda))
\end{align*}
define an action of $R_\al$ on $S(\lambda)$, under which $S(\lambda)$ is a homogeneous irreducible $R_\al$-module. Moreover, $S(\lambda)\not\cong S(\lambda')$ if $\lambda\neq \lambda'$ and every homogeneous irreducible $R_\al$-module is isomorphic to one of the modules $S(\lambda)$.
\end{Theorem}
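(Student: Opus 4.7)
The plan is to deduce Theorem \ref{tabconst} directly from Theorem \ref{THomog} by transporting the statement along the bijections of Proposition \ref{PShape}. For each configuration $\lambda$ of type $\al$, set $C_\lambda := \{\bi \in I^\al \mid \sh(\bi) = \lambda\}$; by Proposition \ref{PShape}, $C_\lambda$ is a connected component of $G_\al$ and the mutually inverse maps $T \mapsto \bi^T$, $\bi \mapsto T^\bi$ identify $\mathcal{T}(\lambda)$ with $C_\lambda$. I will put $v_T := v_{\bi^T}$ and verify that, under this identification, the formulas of Theorem \ref{tabconst} match the formulas of Theorem \ref{THomog} term by term; the assertions of irreducibility, pairwise non-isomorphism, and exhaustion then transfer verbatim.

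Two translations are needed. First, I need that $\lambda$ is a skew shape in the sense of Definition \ref{skshapes} if and only if $C_\lambda$ is a homogeneous connected component. By Lemma \ref{equivconds}(ii), homogeneity of $C_\lambda$ is equivalent to the failure, for every $\bi \in C_\lambda$, of both patterns $i_r = i_{r+1}$ and $i_r = i_{r+2}$ with $i_{r+1}$ a neighbor of $i_r$. In the bead picture, these two forbidden patterns correspond exactly to having two beads of $\lambda$ on a common runner separated by either zero or exactly one bead on a neighboring runner, which is precisely what Definition \ref{skshapes} prohibits. The direction asserting that a skew shape produces no forbidden weights is straightforward; the reverse uses that any bead-separation pattern in the abacus can be realized as a consecutive segment of some $\bi^T$, obtained by labeling beads bottom-up and reaching the two problematic beads (together with any intervening bead) consecutively.

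Second, given a standard $\lambda$-tableau $T$, I need that $s_r T$ is standard if and only if $s_r \bi^T \in C_\lambda$, and that in this case $\bi^{s_r T} = s_r \bi^T$. Since $\lambda$ is a skew shape, $i^T_r \neq i^T_{r+1}$, so $s_r \bi^T \in C_\lambda$ iff $s_r$ is an admissible transposition for $\bi^T$, iff the beads $T(r)$ and $T(r+1)$ lie on non-neighboring runners. In that case the standard condition imposes no relative height constraint between them, so swapping labels preserves standardness; otherwise the standard condition forces $T(r+1)$ above $T(r)$, and the swap breaks it. Combining the two translations, the formulas of Theorem \ref{tabconst} become the formulas of Theorem \ref{THomog} under $v_T \leftrightarrow v_{\bi^T}$, and all the remaining assertions of Theorem \ref{tabconst} follow at once.

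The main obstacle is the first translation: matching the geometric Definition \ref{skshapes} (``at least two separating beads on distinct neighboring runners'') with the combinatorial condition (\ref{ENC}) requires some care with the abacus geometry, especially in verifying that a violation of the separation condition can always be witnessed by a labeling-order choice producing a weight $\bi^T$ in which the offending beads occupy consecutive positions.
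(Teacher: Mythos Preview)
Your proposal is correct and is exactly the route the paper takes: the paper presents Theorem \ref{tabconst} as a direct restatement of Theorem \ref{THomog} under the bijections of Proposition \ref{PShape}, with the skew-shape/homogeneous-component correspondence asserted (via Lemma \ref{equivconds} and Definition \ref{skshapes}) but not spelled out. You in fact supply more detail on the two translations than the paper does, so nothing further is needed.
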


\subsection{Characters and the Littlewood-Richardson rule}

Let $\lambda$ be a skew shape and let $S(\lambda)$ be the corresponding
irreducible homogeneous $R_\alpha$-module constructed in Theorem \ref{tabconst}. 
Recall the maps $\bi\mapsto T^\bi$ and $T\mapsto \bi^T$ from \S\ref{SSConf}. 
Since $v_T$ is in the $\bi^T$-weight space, and this weight space is one dimensional,
the formal character of $R^\lambda_\alpha$ 
is
\begin{equation}\label{skewschur}
\ch S(\lambda) = \sum_{T\in \mathcal{T}(\lambda)} e^{i^T},
\end{equation}
where the sum is over all standard tableaux $T$ of shape $\lambda$. 

Let $\beta,\gamma\in Q_+$ such that $\beta+\gamma = \alpha$.  The product
$R_\beta\otimes R_\gamma$ is naturally a subalgebra of $R_\alpha$, cf. \cite[\S2.6]{KL1}.  If $M$ is a 
homogenous $R_\al$-module then its restriction to $R_\beta\otimes R_\gamma$ is homogenous.
It follows from \eqref{skewschur} that
\begin{equation}\label{LRrule}\Res^{R_\alpha}_{R_\beta\otimes R_\gamma}S(\lambda) 
= \sum_{\mu\subseteq \lambda} S(\mu)\otimes S(\lambda/\mu),
\end{equation}
where the sum is over all configurations $\mu$ of type $\beta$ which are obtained by consecutive removals of removable beads from
$\lambda$, 
and 
$\lambda/\mu$ is the configuration determined by the beads of $\lambda$ that are 
not in $\mu$.  The formula \eqref{LRrule} is a generalization of the skew Schur function formula from \cite[(5.10)]{Mac}:
$$
s_{\lambda/\mu}(x,y) = \sum_{\lambda \supseteq \nu\supseteq \mu}
s_{\lambda/\nu}(x)s_{\nu/\mu}(y)
$$

\subsection{Minuscule elements and hook formula} Finally, we explain a connection
between skew shapes and the fully commutative elements in Coxeter groups studied by  Stembridge \cite{S2} and Fan \cite{F}. A special class of fully commutative elements called dominant minuscule elements will allow us to select straight shapes from the class of skew shapes. 

Using notation of \cite{Kac}, let $\Phi_+$ be 
the set of positive roots, $<$  the dominance order, $P_+$ the set of {\em dominant weights}, 
and $W$ be the Weyl group with simple reflections $r_i$ for $\bi\in I$, so that $W$ is the Coxeter group 
with Coxeter graph $\Gamma$. 

An element $w\in W$ is {\em fully commutative} if for every pair of 
non-commuting generators $r_i$ and $r_j$ there is no reduced expression for $w$  
containing a subword of the form $r_i r_j r_i$.
An element $w\in W$ is {\em dominant minuscule} if there is $\La\in P_+$ and a reduced expression 
$w = r_{i_1} \dots r_{i_d}$ such that 
$$
r_{i_k}r_{i_{k+1}}\dots r_{i_d}\La=\La-\al_{i_k}-\al_{i_{k+1}}-\dots-\al_{i_d}\qquad(1\leq k\leq d).
$$

Using the terminology of \S\ref{SSSS}, let $\la$ be a skew shape and $\mathcal{T}(\la)$ 
the set of standard $\la$-tableaux. If $T\in \mathcal{T}(\la)$ and $\bi^T=(i_1,\dots,i_d)$, set 
\begin{equation}\label{ESt}
w^\la:=r_{i_d}r_{i_{d-1}}\dots r_{i_1}\in W.
\end{equation}


In view of Lemma \ref{equivconds} and Definition \ref{skshapes}, skew shapes and standard tableaux can now be interpreted as follows. 

\begin{Proposition}\label{PSt}
The element $w^\la$ depends only on $\la$ and does not depend on $T\in\mathcal{T}(\la)$. 
Moreover: 
\begin{enumerate}
\item[{\rm (i)}] the right hand side of (\ref{ESt}) is a reduced decomposition of $w^\la$;
\item[{\rm (ii)}] $\la\mapsto w^\la$ is a bijection between the skew shapes with $d$ boxes and the fully commutative elements of $W$ of length $d$;
\item[{\rm (iii)}] for a fixed skew shape $\la$, the assignment (\ref{ESt}) is a bijection between the standard $\la$-tableaux and the reduced decompositions of $w^\la$.
\end{enumerate}
\end{Proposition}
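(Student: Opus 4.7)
\emph{Proof plan.} First, $w^\lambda$ does not depend on $T$: if $T, T' \in \mathcal{T}(\lambda)$ then Proposition~\ref{PShape} places $\bi^T$ and $\bi^{T'}$ in the same connected component of $G_\alpha$, so $\bi^{T'}$ is reached from $\bi^T$ by admissible transpositions; each such transposition swaps adjacent entries $i_r, i_{r+1}$ with $a_{i_r i_{r+1}} = 0$, i.e.\ with $r_{i_r}$ and $r_{i_{r+1}}$ commuting in $W$, and therefore leaves the product $r_{i_d}\cdots r_{i_1}$ unchanged.

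I would prove (i) and full commutativity of $w^\lambda$ simultaneously by induction on $d$. Set $b = i_d$. Since $T(d)$ is removable in $\lambda$, no bead on a runner neighboring $b$ lies above $T(d)$; hence $T(d)$ cannot be a separating bead for any pair on a neighboring runner, and so $\lambda' := \lambda \setminus \{T(d)\}$ is again a skew shape, with $T|_{\lambda'}$ a standard $\lambda'$-tableau of weight $(i_1,\dots,i_{d-1})$. By the inductive hypothesis, $w^{\lambda'} = r_{i_{d-1}}\cdots r_{i_1}$ is a reduced expression of a fully commutative element. To complete the step I must show $\ell(r_b w^{\lambda'}) = d$, equivalently (exchange condition) that no reduced expression of $w^{\lambda'}$ begins with $r_b$. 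If $b \notin \{i_1,\dots,i_{d-1}\}$ this is automatic; otherwise let $k$ be the largest such index. Because commutations never swap two copies of $r_b$, the leftmost occurrence of $r_b$ in $r_{i_{d-1}}\cdots r_{i_1}$ (the one at the position corresponding to $k$) is the only candidate that could reach the first position, and to do so it must successively commute past every letter currently to its left, forcing $a_{b, i_j} = 0$ for every $k < j \leq d-1$. But condition \eqref{ENC} applied to the pair $(k,d)$ with $i_k = i_d = b$ provides $t$ with $k < t < d$ and $i_t$ a neighbor of $b$, a contradiction. Hence $r_{i_d}\cdots r_{i_1}$ is reduced, and since by Proposition~\ref{PCal1}(ii) it contains no consecutive subword $r_a r_c r_a$ with $a, c$ neighbors, the standard Stembridge--Fan criterion for simply-laced types forces $w^\lambda$ to be fully commutative.

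For (iii), Proposition~\ref{PShape} identifies $\mathcal{T}(\lambda)$ with the connected component $C_\lambda$ of $G_\alpha$ via $T \leftrightarrow \bi^T$, and the admissible transpositions on $C_\lambda$ match exactly with the commutation moves on $r_{i_d}\cdots r_{i_1}$, which exhaust all reduced expressions of the fully commutative element $w^\lambda$. For (ii), $\lambda \mapsto w^\lambda$ is injective since $\alpha$ and the component $C_\lambda$ are both recovered from $w^\lambda$. For surjectivity, given a fully commutative $w$ of length $d$ with reduced expression $w = r_{j_1}\cdots r_{j_d}$, set $\bi := (j_d, \ldots, j_1)$: reducedness forbids $i_r = i_{r+1}$, and full commutativity (Stembridge--Fan in the reverse direction) forbids $(i_r,i_{r+1},i_{r+2}) = (a,c,a)$ with $a, c$ neighbors, so by Lemma~\ref{equivconds} the component of $\bi$ is homogeneous and $\sh(\bi)$ is a skew shape with $w^{\sh(\bi)} = w$; the choice of reduced expression of $w$ is immaterial because any two differ by commutations, which correspond to admissible transpositions on $\bi$ and hence preserve $\sh(\bi)$. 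The crux of the argument is the inductive reducedness step, where the geometric skew shape condition is precisely what blocks commuting the relevant copy of $r_b$ to the leftmost position in $w^{\lambda'}$.
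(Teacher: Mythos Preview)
The paper does not give a detailed proof of this proposition; it simply states it ``in view of Lemma~\ref{equivconds} and Definition~\ref{skshapes}'', relying on the standard heap/fully commutative machinery of Viennot, Stembridge and Fan. Your write-up is therefore considerably more explicit than the paper's, and the overall architecture (well-definedness via Proposition~\ref{PShape}, induction on $d$ for reducedness, commutation class $=$ connected component for (iii), and the reverse construction for surjectivity in (ii)) is sound.

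There is, however, one genuine gap. In the inductive step you conclude full commutativity of $w^\lambda$ from the fact that the \emph{single} reduced expression $r_{i_d}\cdots r_{i_1}$ contains no subword $r_a r_c r_a$ with $a,c$ neighbors, invoking a ``Stembridge--Fan criterion''. That criterion says an element is fully commutative iff \emph{no} reduced expression contains such a subword; having one clean expression is not enough. For instance, in type $A_3$ the reduced word $r_1 r_3 r_2 r_1$ has no $r_ar_cr_a$ subword, yet a single commutation produces $r_3 r_1 r_2 r_1$, and the element is not fully commutative. (Correspondingly, $\bi=(1,2,3,1)$ satisfies conditions (i),(ii) of Proposition~\ref{PCal1} but violates \eqref{ENC}, so it does not arise from a skew shape.) The fix is to use the full strength of Lemma~\ref{equivconds}(ii): since $\lambda$ is a skew shape, conditions (i),(ii) of Proposition~\ref{PCal1} hold for \emph{every} $\bj\in C_\lambda$, and the commutation class of $r_{i_d}\cdots r_{i_1}$ is exactly $\{r_{j_d}\cdots r_{j_1}:\bj\in C_\lambda\}$ by Proposition~\ref{PShape}. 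Hence no word in the commutation class admits a long braid move, and Matsumoto's theorem forces every reduced expression of $w^\lambda$ to lie in that commutation class. This is also the right way to phrase the reference: you should be citing Lemma~\ref{equivconds}(ii) rather than Proposition~\ref{PCal1}(ii), which is a statement about weights of calibrated modules. The same remark applies to your surjectivity argument in (ii): you verify conditions (i),(ii) only for the chosen $\bi$, but Lemma~\ref{equivconds}(ii) needs them for all of $C$; full commutativity of $w$ supplies this, since every $\bj$ in the component corresponds to a reduced expression of $w$.
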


Dominant minuscule elements are known to be fully commutative, see e.g. \cite[Proposition 2.1]{S2}, and can be characterized in terms of their reduced expressions as follows.

\begin{Proposition}\label{PDM}{\rm \cite[Proposition 2.5]{S2}} 
If $w = r_{i_1} \dots r_{i_d} \in W$ is a reduced expression, then $w$ 
is dominant minuscule if and only if the following two conditions are satisfied:
\begin{enumerate}
\item[{\rm (i)}] between every pair of occurrences of a generator $r_i$ 
(with no other occurrences of $r_i$ in between) there are exactly two terms (possibly  equal to each other) that do not commute with $r_i$;
\item[{\rm (ii)}] the last occurrence of each generator $r_i$ is followed by at most 
one generator that does not commute with $r_i$.
\end{enumerate} 
\end{Proposition}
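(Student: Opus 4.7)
The plan is to reduce the defining equation of a dominant minuscule element to an explicit numerical constraint on $\La$, and then to read off conditions (i) and (ii) from it. Applying $r_{i_k}$ to $\mu_k := r_{i_{k+1}}\cdots r_{i_d}\La = \La - \sum_{l>k}\al_{i_l}$ must subtract exactly $\al_{i_k}$, which is equivalent to $\langle \mu_k,\al_{i_k}^\vee\rangle = 1$. Since $\Gamma$ is simply laced, $\langle\al_j,\al_i^\vee\rangle = a_{ij}\in\{2,-1,0\}$, so this unpacks to
\[
\langle\La,\al_{i_k}^\vee\rangle \;=\; 1 + 2N_k - M_k,
\]
where $N_k := \#\{l>k : i_l = i_k\}$ and $M_k := \#\{l>k : i_l \text{ is a neighbor of } i_k\text{ in }\Gamma\}$. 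In the simply-laced setting, $r_i$ and $r_j$ fail to commute precisely when $i,j$ are neighbors, so $M_k$ counts exactly the later generators that do not commute with $r_{i_k}$.

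For the forward implication, fix $j\in I$ and list its occurrences in the reduced expression as $k_1<k_2<\cdots<k_p$. The displayed equation forces the same value $\langle\La,\al_j^\vee\rangle$ at every $k_m$. Since $N_{k_m}-N_{k_{m+1}} = 1$, equating the equation at $k_m$ and $k_{m+1}$ yields $M_{k_m} - M_{k_{m+1}} = 2$, which is exactly condition (i): the number of neighbors of $j$ occurring strictly between $k_m$ and $k_{m+1}$ is two. At the last occurrence, $N_{k_p} = 0$, so $\langle\La,\al_j^\vee\rangle = 1 - M_{k_p}$, and dominance of $\La$ forces $M_{k_p}\leq 1$, which is condition (ii). Because dominant minuscule elements are fully commutative (as noted just before the statement), conditions (i) and (ii) pass between reduced expressions, which for a fully commutative $w$ differ only by commutations of commuting generators.

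For the backward implication, I would define $\La$ by $\langle\La,\al_j^\vee\rangle := 1 - M_{k_p(j)}$ on generators $j$ appearing in the given expression (where $k_p(j)$ denotes their last occurrence) and $\langle\La,\al_j^\vee\rangle := 0$ on all other fundamental coweights; condition (ii) ensures $\La\in P_+$. Condition (i) then propagates the equality $\langle\La,\al_{i_k}^\vee\rangle = 1 + 2N_k - M_k$ backwards from the last occurrence of each generator to every earlier one, so the minuscule identity holds at every $k$, and $w$ is dominant minuscule with respect to this $\La$. The main obstacle throughout is purely organisational: tracking how $N_k$ and $M_k$ telescope between consecutive occurrences of a fixed generator, and confirming that the answer is independent of the chosen reduced expression (which is where full commutativity of $w$ enters). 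No deeper structural input is required beyond unpacking the definition and using that, in simply-laced type, non-commutation of simple reflections coincides with adjacency in $\Gamma$.
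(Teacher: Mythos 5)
The paper does not prove this proposition at all --- it is imported verbatim from Stembridge \cite[Proposition 2.5]{S2} --- so your argument stands on its own, and it is essentially correct: it is a self-contained reconstruction of Stembridge's bookkeeping in the simply-laced case. Your reformulation of the chain condition as $\langle\La,\al_{i_k}^\vee\rangle=1+2N_k-M_k$ is right, the telescoping between consecutive occurrences of a generator gives (i), dominance at the last occurrence gives (ii), and your construction of $\La$ in the converse direction (you mean: prescribe the pairings $\langle\La,\al_j^\vee\rangle$ with the simple \emph{coroots}, not values ``on fundamental coweights'') correctly reverses the computation. The one step needing care is the one you flag: the definition asks for the chain condition for \emph{some} reduced expression, while the proposition concerns the \emph{given} one. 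Your transfer via full commutativity does work, but it silently uses two further standard facts: that all reduced expressions of a fully commutative element are linked by commutation moves (the paper's definition of fully commutative is the no-$r_ir_jr_i$-subword one, so this is \cite{S1}), and that conditions (i) and (ii) are invariant under a single such move (easy, since only letters commuting with $r_i$ ever cross an occurrence of $r_i$). A cleaner patch avoids both: since $\langle\mu_{k+1},\al_{i_k}^\vee\rangle=\langle\La,(r_{i_d}\cdots r_{i_{k+1}}\al_{i_k})^\vee\rangle$ and the roots $r_{i_d}\cdots r_{i_{k+1}}(\al_{i_k})$ run over the inversion set $\{\be\in\Phi_+\mid w\be<0\}$, the chain condition is equivalent to $\langle\La,\be^\vee\rangle=1$ for all such $\be$, a condition depending on $w$ alone and hence independent of the reduced expression. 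With either route the argument is complete, and it is in substance the same computation as in Stembridge's original proof.
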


Now it is easy to see that in type $A_\infty$, skew shapes $\la$ with 
$w^\la$ dominant minuscule are (disjoint unions of) `straight' shapes in the usual sense, i.e. 
partitions drawn in the Russian notation. This motivates the following
definition. 
A skew shape $\la$ is a {\em straight shape} if $w^\la$ is dominant minuscule.  Proposition~\ref{PDM} yields the following explicit characterization of the straight shapes.

\begin{Lemma}
Let $\la$ be a configuration. Then $\la$ is a straight shape if and only if 
the following conditions are satisfied:
\begin{enumerate}
\item[{\rm (i)}] between every pair of beads $A$, $B$ on a runner $i$ 
(with no beads on the runner $i$ between $A$ and $B$) there are exactly two beads between $A$ and $B$, which lie on runners neighboring $i$ (possibly on the same runner);
\item[{\rm (ii)}] the bottom bead on a runner $i$ has at most 
one bead below it on runners neighboring $i$.
\end{enumerate}
\end{Lemma}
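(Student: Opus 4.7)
The plan is to apply Proposition~\ref{PDM} to the reduced expression $w^\la=r_{i_d}r_{i_{d-1}}\cdots r_{i_1}$ furnished by a choice of standard tableau $T\in\mathcal T(\la)$ and Proposition~\ref{PSt}(i), and to translate the two reduced-word conditions of Proposition~\ref{PDM} directly into the two configuration-theoretic conditions of the lemma. Before doing so one should note that $w^\la$ and the notion of standard tableau are defined only when $\la$ is a skew shape: a straight shape is a skew shape by definition, while condition~(i) of the lemma forces the existence of at least two separating beads on (distinct) neighboring runners between any pair of same-runner beads, so it implies the skew-shape property of Definition~\ref{skshapes}. Thus in both directions of the biconditional $\la$ is a skew shape and the combinatorics of Proposition~\ref{PSt} is available.

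The main translation rests on the following observation. The word $\bi^T=(i_1,\dots,i_d)$ lists the runners of $T(1),\dots,T(d)$ in the order in which the beads are placed, and by the standard-tableau condition each $T(k)$ lies above all previously placed beads on \emph{neighboring} runners; in the simply-laced setting $r_i$ and $r_j$ fail to commute precisely when $i$ and $j$ are neighbors in $\Ga$. Given beads $A=T(\ell)$ and $B=T(k)$ on runner $i$ with $\ell<k$, a short verification shows that the beads on runners neighboring $i$ occurring among $T(\ell+1),\dots,T(k-1)$ are precisely the beads on runners neighboring $i$ lying physically between $A$ and $B$ in the configuration: a neighboring-runner bead $C$ placed before $A$ must lie below $A$ (by the standard-tableau condition applied to $A$ with $C$ one of the earlier neighboring-runner beads), a neighboring-runner bead placed after $B$ must lie above $B$, and the converse implications follow from the same axiom applied with $C$ in place of $A$ or $B$. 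Under the reversal of the word, consecutive occurrences of $r_i$ in $r_{i_d}\cdots r_{i_1}$ match consecutive beads on runner $i$ in $\la$, so condition~(i) of Proposition~\ref{PDM} becomes exactly condition~(i) of the lemma.

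Condition~(ii) is handled by the same dictionary: the last occurrence of $r_i$ in $r_{i_d}\cdots r_{i_1}$ is $r_{i_\ell}$, where $\ell$ is minimal with $i_\ell=i$, and this labels the bottom bead on runner $i$; the generators following it in the word that do not commute with $r_i$ are in bijection with the beads on runners neighboring $i$ lying physically below this bottom bead, by the same placement-versus-position argument as above. I do not expect a substantive obstacle in the proof: the only real content is the ``placed-between $\Leftrightarrow$ physically-between'' correspondence for neighboring-runner beads, and once this bookkeeping is written out carefully the lemma reduces to a direct reading of Proposition~\ref{PDM}.
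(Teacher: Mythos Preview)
Your proposal is correct and matches the paper's approach: the paper offers no explicit proof, merely stating that Proposition~\ref{PDM} yields the characterization, and your argument supplies exactly the dictionary between reduced-word conditions on $w^\la$ and bead positions in $\la$ that makes this precise. One small wording point: your parenthetical ``(distinct) neighboring runners'' should not be read as requiring the two separating beads to lie on mutually different runners, since both condition~(i) of the lemma and the homogeneity condition~(\ref{ENC}) underlying Definition~\ref{skshapes} allow them to sit on the same neighboring runner.
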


Peterson and Proctor have given  a hook-type formula for the number of standard
tableaux of a straight shape.
The proof of this hook formula, and generalizations of it,
can be found e.g. in Nakada in \cite{N2}. 
In view of Proposition~\ref{PSt}(iii)), the Peterson-Proctor hook
formula can be stated, in our context, as follows.

\begin{Theorem}
{\bf (Peterson-Proctor Hook Formula)}
Let $\la$ be a straight shape with $d$ beads. Using notation as in Theorem~\ref{tabconst},
the dimension of the corresponding representation of the Khovanov-Lauda algebra is
$$
\dim S(\lambda) = \mathrm{Card}(\mathcal{T}(\la)) 
= \frac{d!}{\prod_{\beta\in \Phi(w^\la)}\height(\beta)},
$$
where
$
\Phi(w):=\{\be\in \Phi_+\mid w^{-1}(\be)<0\},
$
and $\mathrm{Card}(\mathcal{T}(\la))$ is the number of standard tableaux of shape $\lambda$.
\end{Theorem}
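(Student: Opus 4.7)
The plan is to reduce the theorem to two established inputs: Theorem~\ref{tabconst} on the one hand, and the Peterson-Proctor hook formula for reduced decompositions of dominant minuscule elements (in the form proved in \cite{N2}) on the other.

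First I would dispose of the equality $\dim S(\lambda) = \mathrm{Card}(\mathcal{T}(\la))$. This is immediate from Theorem~\ref{tabconst}: by construction $S(\lambda)$ has a homogeneous $F$-basis $\{v_T \mid T\in\mathcal{T}(\lambda)\}$, so its dimension is exactly the number of standard $\lambda$-tableaux.

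Next I would translate the tableau count into a count of reduced expressions in the Weyl group. Since $\la$ is a straight shape, $w^\la$ is by definition dominant minuscule, and in particular (Proposition~\ref{PDM}) fully commutative. By Proposition~\ref{PSt}(iii), the assignment $T \mapsto (r_{i_d},\dots,r_{i_1})$ of (\ref{ESt}) gives a bijection between $\mathcal{T}(\la)$ and the set of reduced decompositions of $w^\la$. Hence
\begin{equation*}
\mathrm{Card}(\mathcal{T}(\la)) \;=\; \#\{\text{reduced decompositions of } w^\la\}.
\end{equation*}

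Finally I would invoke the Peterson-Proctor hook formula itself. For a dominant minuscule element $w\in W$ of length $d$ in an arbitrary simply laced Coxeter group, Peterson and Proctor proved (see \cite{N2} for a published proof and generalizations) that the number of reduced decompositions of $w$ equals
\begin{equation*}
\frac{d!}{\prod_{\beta\in \Phi(w)}\height(\beta)}.
\end{equation*}
Combining the three steps gives the asserted formula. The only genuinely nontrivial ingredient is this last one; everything earlier in the proof is bookkeeping via Theorem~\ref{tabconst} and Proposition~\ref{PSt}(iii), so the main obstacle is not reproved here but rather imported as a black box from \cite{N2}. The role of the present section is essentially to verify that our notion of a straight shape, built from the abacus/configuration model and the irreducible homogeneous $R_\al$-module $S(\la)$, matches the dominant minuscule framework to which the Peterson-Proctor result applies, and that verification has already been done in Proposition~\ref{PSt} and the definition of a straight shape.
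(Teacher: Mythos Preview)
Your proposal is correct and matches the paper exactly: the paper does not give an independent proof of this theorem, but simply observes that Proposition~\ref{PSt}(iii) identifies $\mathrm{Card}(\mathcal{T}(\la))$ with the number of reduced decompositions of the dominant minuscule element $w^\la$, and then cites \cite{N2} for the hook formula itself. Your write-up makes these reductions explicit and adds the (trivial) observation $\dim S(\lambda)=\mathrm{Card}(\mathcal{T}(\la))$ from Theorem~\ref{tabconst}, which the paper leaves implicit.
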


\end{document}